\newtheorem{theorem}{Theorem}[section]
\newtheorem{lemma}[theorem]{Lemma}
\newtheorem{corollary}[theorem]{Corollary}
\theoremstyle{definition}
\newtheorem{definition}[theorem]{Definition}
\theoremstyle{remark}
\newtheorem{remark}[theorem]{Remark}
\numberwithin{equation}{section}
\def\ind{{\mathcal I}}
\begin{document}
	\setcounter{page}{1}
	
\title[$L^p$-$L^q$ boundedness of Fourier multipliers]{$L^p$-$L^q$ boundedness of Fourier multipliers associated with the anharmonic Oscillator}
	
	\author[Marianna Chatzakou]{Marianna Chatzakou}
	\address{
		Marianna Chatzakou:
		\endgraf
		Department of Mathematics
		\endgraf
		Imperial College London
		\endgraf
		180 Queen's Gate, London SW7 2AZ
        \endgraf
         United Kingdom
		\endgraf
		{\it E-mail address} {\rm m.chatzakou16@imperial.ac.uk}
		}
	
	\author[Vishvesh Kumar]{Vishvesh Kumar}
	\address{
		Vishvesh Kumar:
		\endgraf
		Department of Mathematics: Analysis Logic and Discrete Mathematics
		\endgraf
		Ghent University, Belgium
		\endgraf
		{\it E-mail address} {\rm vishveshmishra@gmail.com}
		\endgraf
	}

\thanks{The authors are grateful to Prof. Micheal Ruzhansky for valuable suggestions. Marianna Chatzakou is supported by the EPSRC grant EP/N509486/1. Vishvesh Kumar is supported by the FWO Odysseus 1 grant G.0H94.18N: Analysis and Partial Differential Equations.}

	%Tusi Mathematical Research Group (TMRG), Mashhad, Iran.}
	%\email{\textcolor[rgb]{0.00,0.00,0.84}{second@afa.ac.ir}}

	\subjclass[2010]{42B15; 58J40.; Secondary 47B10, 47G30, 35P10 }
	
	\keywords{Anharmonic oscillator; Fourier multipliers; Spectral Multipliers, Paley inequality,  Hausdorff-Young- Paley inequality, $L^p$-$L^q$ Boundedness, Heat kernels}
	
	\date{Received: xxxxxx;  Revised: yyyyyy; Accepted: zzzzzz.
	}
	
	\begin{abstract} In this paper we study the $L^p$-$L^q$ boundedness of the Fourier multipliers in the setting where the underlying Fourier analysis is introduced with respect to the eigenfunctions of an anharmonic oscillator $A$. Using the notion of a global symbol that arises from this analysis, we extend a version of the Hausdorff-Young-Paley inequality that guarantees the $L^p$-$L^q$ boundedness of these operators for the range $1<p \leq 2 \leq q <\infty$. The boundedness results for spectral multipliers acquired, yield as particular cases Sobolev embedding theorems and time asymptotics for the $L^p$-$L^q$ norms of the heat kernel associated with the anharmonic oscillator. Additionally, we consider functions $f(A)$ of the anharmonic oscillator on modulation spaces and prove that Linsk\u ii's trace formula holds true even when $f(A)$ is simply a nuclear operator.   \\
%	\textbf{MSC 2010.} Primary {42B15; 58J40; Secondary 47B10; 47G30; 35P10}.
	\end{abstract} \maketitle
	
	\tableofcontents
	
\section{Introduction}
The main aim of this paper is to establish sufficient conditions for the $L^p$-$L^q$ boundedness of the Fourier multipliers associated with a family of anharmonic oscillators on $\mathbb{R}^n$ as considered in \cite{CDR18}. The boundedness of  Fourier multipliers is a central theme of harmonic analysis with far reaching applications in different areas. Such analysis is traced back to H\"ormander's seminal paper \cite{Hormander1960} in 1960. Later, the $L^p$-boundedness of Fourier multipliers and spectral multipliers was thoroughly investigated by several researchers in many different settings, we cite here \cite{Anker, Anker92, CR, Ruzwirth, BT, CR16} to mention a very  few of them. 
\par Here we deal with $L^p$-$L^q$ multipliers as opposed to the $L^p$-multipliers, for which theorems of Mihlin-H$\ddot{\text{o}}$rmander or Marcinkiewicz type, provide results in different settings based on the regularity of the symbol. The $L^p$-$L^q$ boundedness of Fourier multipliers and spectral multipliers on locally compact unimodular groups and on compact homogeneous spaces is recently settled by Akylzhanov et al. \cite{AR,ARN, ARN1}. 

Anharmonic oscillators on $\mathbb{R}^n$ are important operators in analysis and mathematical physics, but also in number theory. The analysis of the energy levels of the Schr\"{o}dinger operator $i\partial_t \psi=-\Delta \psi+V(x)\psi$ can be reduced to the corresponding eigenvalue problem of an operator of the form $A=-\Delta+V(x)$; the so-called \textit{anharmonic oscillator}. \\
\indent Unlike the case of the harmonic oscillator (case $V(x)=|x|^2$) where the eigenfunctions are the Hermit functions and are well-understood (cf. \cite{Thangavelu}, \cite{Par10}), the exact solution of the eigenvalue problem associated with the anharmonic oscillator is still unknown. Despite the intensive research on the this topic over the last 40 years (cf. \cite{AM}, \cite{DJ}, \cite{HR1}, \cite{HR2}, \cite{HM}, \cite{V}), the exact solution of it, even in the quartic case, is still unknown \cite{OU}. The literature on the subject is vast and overviews can be found, for instance in \cite{CDR18}.\\
\indent Here we consider operators which in \cite{CDR18} has been regarded as ``prototype'' in the class under consideration. For such operators the potential is given as $V(x)=|x|^{2k}$, where $k \geq 1$ is an integer, and we generalise $A$ by considering higher order derivatives. In particular, for $l \geq 1$ integer, we define $A$ to be
\begin{equation}\label{defn.anh1}
A=A_{k,l}=(-\Delta)^{l}+|x|^{2k}+1\,.
\end{equation}
\indent The Weyl-H\"{o}rmander calculus associated with the anharmonic oscillator $A_{k,l}$ that has been developed in \cite{CDR18}, will be used in order to prove the continuous inclusion of the Sobolev spaces associated to $A_{k,l}$ (in the sense of Weyl-H\"{o}rmander calculus) to some suitable space, and finally get an estimate for the operator norm of Fourier multipliers. \\
\indent  To prove the above, one needs to follow classical techniques developed by H\"ormander \cite{Hormander1960}.
The Paley-type inequality \cite{HP} describes the growth of the Fourier transform of a function in terms of its $L^p$-norm. Interpolating the latter with the Hausdorff-Young inequality, one can obtain the following H\"ormander's version of the  Hausdorff-Young-Paley inequality,
\begin{equation}\label{3}
    \left(\int\limits_{\mathbb{R}^n}|(\mathscr{F}f)(\xi)\phi(\xi)^{ \frac{1}{r}-\frac{1}{p'} }|^r\right)^{\frac{1}{r}}\leq \Vert f \Vert_{L^p(\mathbb{R}^n)},\,\,\,1<p\leq r\leq p'<\infty, \,\,1<p<2,
\end{equation}
where $\mathscr{F}f$ stands for the Fourier transform of $f$ on $\mathbb{R}^n$ and $\phi$ is a positive function defined on $\mathbb{R}^n.$ As a consequence  of the Hausdorff-Young-Paley inequality, H\"ormander \cite[page 106]{Hormander1960} proves that the condition 
\begin{equation}\label{4}
    \sup_{t>0}t^b\{\xi\in \mathbb{R}^n:m(\xi)\geq t\}<\infty,\quad \frac{1}{p}-\frac{1}{q}=\frac{1}{b}\,,
\end{equation}where $1<p\leq 2\leq q<\infty,$ implies the existence of a bounded extension of a Fourier multiplier $T_{m}$ with symbol $m$ from $L^p(\mathbb{R}^n)$ to $L^q(\mathbb{R}^n).$ Recently, Akylzhanov and Ruzhansky studied H\"ormander's classical results for unimodular locally compact groups  \cite{AR}. In their paper the key idea is the extension of H\"ormander's theorem to the unimodular locally compact  groups is the reformulation of this classical theorem in terms of %as follows: 
%$$\|T_m\|_{L^p(\mathbb{R}^n) \rightarrow L^q(\mathbb{R}^n)} \lesssim \sup_{s>0} s\left( \int_{ \{\xi \in \mathbb{R}^n :\, m(\xi) \geq s\} } d\xi \right)^{\frac{1}{p}-\frac{1}{q}} \simeq \|m\|_{L^{r, \infty}(\mathbb{R}^n)} \simeq \|T_m\|_{L^{r, \infty}(\textnormal{VN}(\mathbb{R}^n))},$$ where $\frac{1}{r}=\frac{1}{p}-\frac{1}{q},$ $\|m\|_{L^{r, \infty}(\mathbb{R}^n)}$ is the Lorentz norm of $m,$ and $\|T_m\|_{L^{r, \infty}(\textnormal{VN}(\mathbb{R}^n))}$ is the norm of the operator $T_m$ in the Lorentz space on the group von Neumann algebra $\textnormal{VN}(\mathbb{R}^n)$ of $\mathbb{R}^n.$ They 
noncommutative Lorentz spaces and group von-Neumann algebras of unimodular groups. 

The following reads as the Hausdorff-Young-Paley inequality in our setting where the appearing Fourier transform $\mathcal{F}_{A_{k,l}}$ is taken with respect to the eigenfunctions of $A_{k,l}$ precisely defined in Section \ref{Section3}. 
\begin{theorem}\label{thm12} Let $1<p\leq 2,$ and let   $1<p \leq b \leq p' \leq \infty,$ where $p'= \frac{p}{p-1}$.  If $\varphi(j)$ is a positive sequence in $\mathbb{N}$ such that 
 $$M_\varphi:= \sup_{t>0} t  \sum_{\overset{j \in \mathbb{N}}{t \leq \varphi(j) }}  \|u_j\|^2_{L^\infty(\mathbb{R}^n)}    $$
is finite, then for every $f \in {L^p(\mathbb{R}^n)}$ 
 we have
\begin{equation} \label{eq1.3}
    \left( \sum_{j \in \mathbb{N}}  \left( |\mathcal{F}_{A_{k, l}}f(j)| \varphi(j)^{\frac{1}{b}-\frac{1}{p'}} \right)^b \|u_j\|_{L^\infty(\mathbb{R}^n)}^{2-b}       \right)^{\frac{1}{b}} \lesssim_p M_\varphi^{\frac{1}{b}-\frac{1}{p'}} \|f\|_{L^p(\mathbb{R}^n)}.
\end{equation}
\end{theorem}
Here, $(u_j)_{j \in \mathbb{N}}$ are the eigenfunctions of the anharmonic oscillator $A_{k,l}$ corresponding to the eigenvalues $(\lambda_j)_{j \in \mathbb{N}}$ in decreasing order. Although, the form of the eigenfuctions $u_j$ is unknown, the Sobolev estimates in this setting as in \cite{CDR18}, show that the $L^\infty$ norm of them is finite, and as a result \eqref{eq1.3} is well-defined; see Lemma \ref{Sob.emb}. For a complete exposition of the necessary notation and ideas involved in \eqref{eq1.3} we refer to Sections \ref{Section2} and \ref{Section3}.
\par The analysis on the asymptotic behaviour of the eigenvalues of the anharmonic oscillator $A_{k,l}$ as appeared in \cite{BBR}, or in more generality in \cite{CDR18}, together with Theorem \ref{thm12} and the analysis prior to this, leads to the following result on the $L^p-L^q$ boundedness of the Fourier multipliers associated with the anharmonic oscillator. 
\par We note that the following result is new even in the setting of the simpler case of the harmonic oscillator in any dimension. Precisely, a simplification of the arguments used here, would lead to an analogous to Theorem \ref{thm12} result in the setting of the harmonic oscillator where, of course, the Fourier analysis, should be regarded as the one associated with it.   
\begin{theorem} \label{Multiintro}
Let $1<p \leq 2 \leq q <\infty.$  Suppose that $m$ is a $A_{k,l}$-Fourier multiplier with $A_{k,l}$-symbol $\sigma_{m}$ on $\mathbb{R}^n.$ Then we have 
$$ \|m\|_{L^p(\mathbb{R}^n) \rightarrow L^q(\mathbb{R}^n)} \lesssim \sup_{s>0} s\left(  \ \sum_{\overset{j \in \mathbb{N}}{|\sigma_m(j)| >s}} \|u_j\|^2_{L^\infty(\mathbb{R}^n)}  \right)^{\frac{1}{p}-\frac{1}{q}}.$$
\end{theorem}
As an application of Theorem \ref{Multiintro} we get the following theorem on the boundedness of spectral multipliers of the aharmonic oscillator.

  \begin{corollary}\label{corintro}
  Let $1<p\leq 2 \leq q<\infty$, and let $\varphi$ be any decreasing function on $[1,\infty)$ such that
 $\lim_{u \rightarrow \infty} \varphi(u)=0$. Then,
  \begin{equation}
  \|\varphi(A_{k,l})\|_{L^p(\mathbb{R}^n) \rightarrow L^q(\mathbb{R}^n)} \lesssim \sup_{u>1} \varphi(u)\left(u^{1+\frac{(k+l)n}{2kl}}\right)^{\frac{1}{p}-\frac{1}{q}}. 
  \end{equation}
 \end{corollary}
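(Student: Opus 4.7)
The plan is to view $\varphi(A_{k,l})$ as an $A_{k,l}$-Fourier multiplier, apply Theorem \ref{Multiintro}, and reduce everything to a Weyl-type growth bound for the weighted spectral counting function $\sum_{\lambda_j\le u}\|u_j\|_{L^\infty(\mathbb{R}^n)}^2$.

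First I would note that, since $\varphi(A_{k,l})u_j=\varphi(\lambda_j)u_j$, the operator $\varphi(A_{k,l})$ is an $A_{k,l}$-Fourier multiplier with symbol $\sigma(j)=\varphi(\lambda_j)$. Theorem \ref{Multiintro} then gives
\[
\|\varphi(A_{k,l})\|_{L^p(\mathbb{R}^n)\to L^q(\mathbb{R}^n)}\lesssim \sup_{s>0}s\,\Bigl(\sum_{j:\,\varphi(\lambda_j)\ge s}\|u_j\|_{L^\infty(\mathbb{R}^n)}^2\Bigr)^{\frac{1}{p}-\frac{1}{q}}.
\]
Since $\varphi$ is non-increasing on $[1,\infty)$ with $\varphi(u)\to 0$ at infinity, the change of variable $s=\varphi(u)$ identifies the level set $\{j:\varphi(\lambda_j)\ge s\}$ with $\{j:\lambda_j\le u\}$ (values of $s$ above $\varphi(\lambda_1)$ yield an empty level set and can be discarded). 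The entire corollary thus reduces to the single growth estimate
\[
\sum_{\lambda_j\le u}\|u_j\|_{L^\infty(\mathbb{R}^n)}^2 \;\lesssim\; u^{2+\frac{n(k+l)}{2kl}},\qquad u>1.
\]

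To prove this last estimate I would combine two ingredients supplied by the Weyl--H\"ormander calculus of $A_{k,l}$ developed in \cite{CDR18}. The first is a Weyl-type eigenvalue counting bound $\#\{j:\lambda_j\le u\}\lesssim u^{n(k+l)/(2kl)}$, which records the effective dimension of the operator. The second is a uniform pointwise eigenfunction estimate of the form $\|u_j\|_{L^\infty(\mathbb{R}^n)}\lesssim \lambda_j$, obtained by picking an integer $N$ large enough that the Schwartz kernel of $A_{k,l}^{-N}$ is continuous on the diagonal (so that $A_{k,l}^{-N}\colon L^2(\mathbb{R}^n)\to L^\infty(\mathbb{R}^n)$ is bounded), writing $u_j=\lambda_j^N A_{k,l}^{-N}u_j$ together with $\|u_j\|_{L^2}=1$, and then refining the resulting $\lambda_j^N$ down to $\lambda_j^1$ via a Sobolev-embedding argument inside the $A_{k,l}$-calculus. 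Squaring this eigenfunction bound and summing it against the counting estimate produces exactly $u^2\cdot u^{n(k+l)/(2kl)}=u^{2+n(k+l)/(2kl)}$.

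The principal obstacle is the second ingredient: balancing the Sobolev exponent against the effective dimension of $A_{k,l}$ in order to extract the $L^\infty$ bound on eigenfunctions with the sharp power of $\lambda_j$ that matches the exponent $2+n(k+l)/(2kl)$ appearing in the statement. Once that step is carried out, the corollary follows by a direct application of Theorem \ref{Multiintro} combined with the change of variable $s=\varphi(u)$ described above.
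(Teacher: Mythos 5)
Your proposal follows essentially the same route as the paper: identify $\varphi(A_{k,l})$ as a multiplier with symbol $\varphi(\lambda_j)$, apply Theorem \ref{Multiintro}, substitute $s=\varphi(u)$, bound $\|u_j\|_{L^\infty(\mathbb{R}^n)}\lesssim\lambda_j$ by a Sobolev embedding in the Weyl--H\"ormander calculus, and sum against the counting bound $N(u)\lesssim u^{\frac{(k+l)n}{2kl}}$. The step you flag as the principal obstacle is exactly the paper's Proposition \ref{Sob.emb} (the embedding $H(M_{k,l},g^{k,l})\hookrightarrow L^{\infty}(\mathbb{R}^n)$), which yields $\|u_j\|_{L^\infty(\mathbb{R}^n)}\leq C\|A_{k,l}u_j\|_{L^2(\mathbb{R}^n)}=C\lambda_j$ in one line, so no detour through $A_{k,l}^{-N}\colon L^2\to L^\infty$ and a subsequent refinement of the exponent is needed.
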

 
Corollary \ref{corintro} yields time asymptotic for the $L^p$-$L^q$ norms of heat kernel associated with the anharmonic oscillator $A_{k,l}$; that is the fundamental solution of the heat equation when the Laplace operator is replaced by the differential operator $A_{k,l}$ as in Remark \ref{Heatintro}, and Sobolev-type estimates; see Remark \ref{Sobointro}. Finally, it allows to find the range of $m>0$ for which $A_{k,l}^{-m}$ is bounded from $L^{p}(\mathbb{R}^n)$ to $L^q(\mathbb{R}^n)$, see Corollary \ref{bound.inverse}

In the last part of the paper we study the $r$-nuclearity of functions of the anharmonic oscillator when acting on the modulation spaces $\mathcal{M}_{w}^{p,q}$ on $\mathbb{R}^n$. In this concept we are investigating the conditions on the decomposition of the operator that allow for the operator to be $r$-nuclear and also for Lindsk\u ii's trace formula (also called the nuclear trace) to hold in cases where $r \in (0,1]$ is not in the standard range $(0,2/3]$. These concepts have been studied by many authors in different settings; for instance we refer to \cite{BarazaD, CardonaFIO, CardonaBesov,CardonaKumar20181,CardonaKumar20182, CCK, Del, DR, DRT17, DW, KS, Toft1, Toft}.
The following is our main theorem in this context.      

\begin{theorem}
Let $0<r \leq 1$, $1\leq p,q<\infty$ and $w$ be a submultiplicative polynomially moderate weight. The operator $f(A_{k,l})$ is $r$-nuclear on $\mathcal{M}_{w}^{p,q}(\mathbb{R}^n)$, provided that 
	\begin{equation}\label{sec5.thm.as.1}
	\sum_{j=1}^{\infty}|f(\lambda_j)|^r \|u_j\|^{r}_{\mathcal{M}_{w}^{p,q}}\|u_{j}\|^{r}_{\mathcal{M}_{w^{-1}}^{p^{'},q^{'}}}<\infty\,.
	\end{equation}
	If, in particular, \eqref{sec5.thm.as.1} holds for $r=1$, then we have the trace formula
	\begin{equation}
	\label{sec5.thm.as.2}
	\textnormal{Tr}(f(A_{k,l}))= \sum_{j=1}^{\infty} f(\lambda_j)\,,
	\end{equation}
	where the series $\sum_{j=1}^{\infty} f(\lambda_j)$ converges absolutely.
\end{theorem}

\section{Basics of the anharmonic oscillator} \label{Section2}
Let us begin by recalling some basic facts about the anharmonic oscillator as in \cite{CDR18}, and performing some preliminary analysis on it. For the rest of this section, and the subsequent ones, we assume that the integers $k,l \geq 1$ in \eqref{defn.anh1} are fixed, i.e., we can write 
\begin{equation}\label{defn.anh}
    A=(-\Delta)^{l}+|x|^{2k}+1\,,\quad x \in \mathbb{R}^n\,,
\end{equation}
when referring to the anharmonic oscillator, where $\Delta$ stands for the Laplace-Betrami operator, and $|\cdot|$ for the euclidean norm on $\mathbb{R}^n$. Let us note that the constant in \eqref{defn.anh} allows for the invertibility of $A$.
\par If $A$ is viewed as a pseudo-differential operator, then it arises as the Weyl-quantization of the symbol $a(x,\xi)=|\xi|^{2l}+|x|^{2k}+1$, where $x,\xi \in \mathbb{R}^n$, and we write $A=a^w$ to denote this relation. In \cite{CDR18} the authors proved that, in the setting of Weyl-H\"{o}rmander calculus, we have
\[
a \in S(M,G)\,,
\]
where the H\"{o}rmander metric $G$ is given by 
\begin{equation}\label{metric}
G(dx,d\xi)=\frac{dx^2}{(|\xi|^{2l}+|x|^{2k}+1)^{\frac{1}{k}}}+\frac{d\xi^2}{(|\xi|^{2l}+|x|^{2k}+1)^{\frac{1}{l}}}\,,
\end{equation}
and the $G$-weight $M$ is given by 
\begin{equation}\label{weight}
    M(x,\xi)=|\xi|^{2l}+|x|^{2k}+1\,.
\end{equation}

\indent It is well-known from the general theory of pseudo-differential operators (c.f \cite[Section 1.7]{NiRo:10}) that if $\sigma^w$ is an injective pseudo-differential operator on the Schwartz space $\mathcal{S}(\mathbb{R}^n)$, whose symbol $\sigma \in S(m,g)$ is real and elliptic, then $\sigma^w$ is an isomorphism on $\mathcal{S}(\mathbb{R}^n)$ and on $\mathcal{S}^{'}(\mathbb{R}^n)$; the Schwartz space and its dual, and moreover, its inverse is a pseudo-differential operator with real and elliptic Weyl-symbol in the class $S(m^{-1},g)$. We recall here that a symbol $\sigma \in S(m,g)$ is \textit{elliptic} if for some $R>0$ 
\[
\sigma (x,\xi) \geq C m(x,\xi)\,,\quad \text{for}\quad x,\xi >R\,, C>0\,.
\]
\indent Simple calculations show that $A$ is an injective operator on $\mathcal{S}(\mathbb{R}^n)$, while one can check that its symbol is elliptic in the class $S(M,G)$ defined above. Hence, its inverse $A^{-1}$ is a positive, and also a compact operator since $M^{-1}(x,\xi) \rightarrow \infty$ as $x, \xi \rightarrow 0$; cf. \cite[Theorem 1.4.2]{NiRo:10}. 

\indent The operator $A$ admits a unique self-adjoint extension on $L^2(\mathbb{R}^n)$, and its spectrum is purely discrete and lies in $(0,\infty)$. By the spectral theorem there exists an orthonormal basis, say $(u_j)_{j \in \mathbb{N}}$, of $L^2(\mathbb{R}^n)$ made of eigenfunctions of it. Let $\left(\lambda_j \right)_{j \in \mathbb{N}}$ be the corresponding eigenvalues.
\par The functional calculus allows for the definition of the operator $A^m$, for each $m \in \mathbb{R}$, with domain
\begin{equation}\label{dom.Ar}
\textnormal{Dom}\left(A^{m}\right)=\{u \in L^2(\mathbb{R}^n) : \sum_{j=1}^{\infty} \lambda_{j}^{2m} |\langle u_j,u \rangle_{L^2}|^2 <\infty \}\,,
\end{equation}
so that if $u \in \textnormal{Dom} \left(A^{m}\right)$, then we can write
\[
A^{m}u=\sum_{j=1}^{\infty} \lambda_{j}^{m} \langle u_{j},u \rangle_{L^2} u_j\,.
\]

\par In the context of the anharmonic oscillator $A$ the below Sobolev spaces are introduced in \cite[Definition 4.7]{CDR18}: For fixed $k,l \geq 1$ integers, and for $m \in \mathbb{R}$, we denote by $\mathcal{H}^{m}_{k,l}$, or simply by $\mathcal{H}^{m}$, the set of tempered distributions $u$ such that \[
A^{\frac{m}{2}}u \in L^2(\mathbb{R}^n)\,,
\]
and for $u \in \mathcal{H}^m$ we define 
\begin{equation}\label{Sob.spaces.anh}
    \|u\|_{\mathcal{H}^m}:=\|A^{\frac{m}{2}}u\|_{L^2(\mathbb{R}^n)}\,.
\end{equation}
The Sobolev spaces $\mathcal{H}^{m}$ reads as
\begin{equation*}
\mathcal{H}^m \equiv H(M^{\frac{m}{2}},G)\,,
\end{equation*}
 with regards to the notion of Sobolev spaces adjusted to the Weyl-H\"{o}rmander calculus, where $M,G$ are as in \eqref{weight} and \eqref{metric}, respectively. 
 
 Recall that for some H\"ormander metric $g$ and some $g$-weight $m$, the Sobolev space denoted by $H(m,g)$ is the set of tempered distributions $u$ such that 
\[
a^{w}u \in L^2(\mathbb{R}^n)\,,\quad \forall a \in S(m,g)\,,
\]
where $a^w$ denotes the Weyl-quantization of the symbol $a$.
\par For the Sobolev embeddings in the scale of Weyl-H\"ormander calculus, we have the following result due to Chemin and Xu \cite[Theorem 1.9]{CX}.
\begin{theorem}
    \label{thm.Xu}
    Let $m_1$, $m_2$ be two $g$-weights such that 
    \[
    \lim_{x,\,\xi \rightarrow \infty}\frac{m_1(x,\xi)}{m_2(x,\xi)}=\infty\,,
    \]
    then $H(m_1,g)$ is compactly included in $H(m_2,g)$.
\end{theorem}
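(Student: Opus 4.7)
The plan is to reduce the compact embedding to the compactness on $L^2(\mathbb{R}^n)$ of a single Weyl operator whose symbol tends to zero at infinity, and then invoke a standard result from the Weyl--H\"ormander calculus.

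First I would observe that the continuous inclusion $H(m_1,g)\hookrightarrow H(m_2,g)$ is automatic: since $m_1$ and $m_2$ are positive $g$-weights they are locally bounded above and below, and combining this with $m_1/m_2\to\infty$ at infinity forces $m_2\lesssim m_1$ globally on $T^*\mathbb{R}^n$. Consequently $S(m_2,g)\subset S(m_1,g)$, so the continuous inclusion of the corresponding Sobolev spaces is immediate from the definition of $H(m,g)$.

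For the compact part, I would pick real elliptic symbols $a_i\in S(m_i,g)$ and appeal to the general Weyl--H\"ormander theory (cf.\ \cite{NiRo:10}) to obtain that each $a_i^w\colon H(m_i,g)\to L^2(\mathbb{R}^n)$ is an isomorphism admitting a parametrix $b_1\in S(m_1^{-1},g)$ with $b_1^w\,a_1^w = I - R$, where $R$ is a smoothing (hence $L^2$-compact) operator. The Weyl symbolic calculus then places $T:=a_2^w\,b_1^w$ in $\mathrm{Op}(S(m_2/m_1,g))$, and the hypothesis $m_1/m_2\to\infty$ is precisely the statement that the symbol of $T$ decays to $0$ at infinity. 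For a bounded sequence $(u_n)\subset H(m_1,g)$, the identity $a_2^w u_n = T(a_1^w u_n) + a_2^w R u_n$, combined with the boundedness of $(a_1^w u_n)$ in $L^2$ and the $L^2$-compactness of both $T$ and $a_2^w R$, yields a subsequence along which $(a_2^w u_n)$ converges in $L^2$; this is precisely the statement that $(u_n)$ has a subsequence converging in $H(m_2,g)$.

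The main obstacle, and the only nontrivial ingredient beyond the routine symbolic calculus, is the implication ``$m\to 0$ at infinity $\Longrightarrow$ $\mathrm{Op}(S(m,g))\subset\mathcal{K}(L^2(\mathbb{R}^n))$''. The classical approach is to approximate such a symbol in the $S(m,g)$-topology by compactly supported ones (whose Weyl quantisations are Hilbert--Schmidt) via a partition of unity adapted to the H\"ormander metric $g$, controlling cross-terms by a Cotlar--Stein almost-orthogonality argument. This is where the bulk of the technical work sits, but the result itself is well established in the Weyl--H\"ormander literature and can therefore be cited rather than reproved here.
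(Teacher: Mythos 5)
The paper contains no proof of this statement: it is imported wholesale from Chemin--Xu \cite[Theorem 1.9]{CX} and used as a black box in the proof of Proposition \ref{Sob.emb}, so there is no in-paper argument to measure yours against. Taken on its own, your strategy is the standard one and is essentially sound: the continuity step is correct (local comparability of $g$-weights plus $m_1/m_2\to\infty$ gives $m_2\lesssim m_1$, hence $S(m_2,g)\subset S(m_1,g)$, hence $H(m_1,g)\subset H(m_2,g)$ with the paper's definition of $H(m,g)$), and the compactness step correctly reduces everything, via conjugation by elliptic quantisations, to the single fact that a Weyl operator with symbol in $S(m,g)$ and $m\to 0$ at infinity is compact on $L^2(\mathbb{R}^n)$ --- which is exactly the ingredient the authors themselves invoke elsewhere (compactness of $A_{k,l}^{-1}$ via \cite[Theorem 1.4.2]{NiRo:10}), so citing it is legitimate.

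The one genuine soft spot is the claim that the parametrix remainder $R=I-b_1^w a_1^w$ is ``smoothing, hence $L^2$-compact.'' For a general H\"ormander metric this does not follow: a full parametrix only puts the symbol of $R$ in $S(h^\infty,g)$, where $h$ is the Planck function of $g$, and if $h$ does not tend to $0$ at infinity (e.g.\ the flat metric $dx^2+d\xi^2$, where $h\equiv 1$ and $S(h^N,g)=S(1,g)=S^0_{0,0}$) such operators are bounded but need not be compact; the same issue then infects $a_2^wR$. The clean repair is to drop the parametrix in favour of the Bony--Chemin identification operators: for every $g$-weight $m$ there exists an \emph{exactly} invertible $\Lambda_m\in \mathrm{Op}(S(m,g))$ with $\Lambda_m^{-1}\in \mathrm{Op}(S(m^{-1},g))$ and $H(m,g)=\Lambda_m^{-1}(L^2(\mathbb{R}^n))$. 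Then $R=0$, and the whole embedding is conjugate to the single operator $\Lambda_{m_2}\Lambda_{m_1}^{-1}\in\mathrm{Op}(S(m_2/m_1,g))$, whose compactness is exactly the cited lemma applied to $m_2/m_1\to 0$. With that substitution your argument closes and gives a complete proof of the quoted theorem.
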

The next technical lemma will be useful for our purposes.
\begin{lemma}\label{Sob.emb}
Let $(u_j)_{j \in \mathbb{N}}$ be the set of eigenvalues of $A$ with corresponding eigenvalues $(\lambda_j)_{j \in \mathbb{N}}$. There exists $C>0$ such that $\|u_j\|_{L^{\infty}(\mathbb{R}^n)}\leq C \sqrt{\lambda_j}$ for every $j \in \mathbb{N}$.
\end{lemma}

\begin{proof}
	It is known, cf. \cite[Theorem 8.8]{BR}, that the usual Sobolev space of order $p$; that is the space $H^{p}(\mathbb{R}^n)=H(\langle \xi \rangle^{p},g^{1,0})$, where $g^{1,0}$ is defined by
	\[
	g^{1,0}(dx,d\xi)=|dx|^2+|d\xi|^2/\langle \xi \rangle^{-2}\,,
	\]
	is continuously included in $L^{\infty}(\mathbb{R}^n)$ for any $p \in [1,\infty]$. On the other hand, for $m \in \mathbb{R}$, we have $H(M^{\frac{m}{2}},g^{1,0})$, where $M$ as in \eqref{weight}, is compactly included in $H^{p}(\mathbb{R}^n)$ for $p \leq lm$, where $k,l \geq 1$ are the fixed integers associated to $A$. Indeed the last is true by Theorem \ref{thm.Xu}, since
	\[
	\underset{x,\xi \rightarrow \infty}{\lim} \frac{(1+|x|^{2k}+|\xi|^{2l})^{\frac{m}{2}}}{\langle \xi \rangle^{p}}=\infty\,,\quad \text{for}\quad lm\geq p\,.
	\]
	For $g$ as in \eqref{metric} we have the inclusion $H(M^{\frac{m}{2}},g) \subset H(M^{\frac{m}{2}},g^{1,0})$. Summarising the above, and choosing $p=1$ we conclude that the space $H(M^{\frac{m}{2}},g^{k,l})$ is continuously embedded in the space $L^{\infty}(\mathbb{R}^n)$ under the condition $lm \geq 1$. The latter implies that
	\[
	\|u_j\|_{L^{\infty}(\mathbb{R}^n)}\leq C\|A^{\frac{m}{2}}u_j\|_{L^{2}(\mathbb{R}^n)}\leq C \lambda^{\frac{m}{2}}_{j} \|u\|_{L^{2}(\mathbb{R}^n)}=C \lambda^{\frac{m}{2}}_{j}\,,\quad C>0\,,
	\]
	for every $m \in\mathbb{R}$ such that $lm \geq 1$, and so also for $m=1$ and this completes the proof of Lemma \ref{Sob.emb}.
\end{proof}

\indent Finally, let us briefly recall some spectral properties of the anharmonic oscillator $A$. In \cite[Corollary 5.6]{CDR18} that authors prove that $A^{-1} \in S_r(L^2(\mathbb{R}^n))$ for $r>\frac{(k+l)n}{2kl}$, where by $S_r(L^2(\mathbb{R}^n))$ we have denoted the \textit{$r$th Schatten-von Neumann class} of operators on the Hilbert space $L^2(\mathbb{R}^n)$. For an operator $T$ in the class $S_r$, with singular values $(s_j(T))_j$\footnote{The singular values of a compact operator $T$ between Hilbert spaces are the square roots of non-negative eigenvalues of the self-adjoint operator $T^{*}T$, where $T^{*}$ denotes the adjoint of $T$.}, it is known that 
\[
s_j(T)=o(j^{-\frac{1}{r}})\,,\quad \text{as}\quad j \rightarrow \infty\,,
\]
where the singular values appear in decreasing order. Hence, in the particular case of the compact, self-adjoint $A^{-1}$, in which we have $s_{j}=\lambda_{j}$, the above property reads as follows 
\begin{equation}\label{eigen.vanish}
\lambda_{j}^{-1}=o(j^{\frac{1}{r}})\,,\quad \text{as}\quad j \rightarrow \infty\,,
\end{equation}
for $r>\frac{(k+l)n}{2kl}$ where the eigenvalues $(\lambda_{j}^{-1})_j$ appear in decreasing order. Moreover, if $N(\Lambda)$ is the eigenvalue counting function associated with the anharmonic oscillator $A$; that is
\[
N(\Lambda):=\{\#j : \lambda_j\leq \Lambda \}\,,
\]then we have the following estimate 
\begin{equation}
    \label{eig.count.funt.anhar.}
    N(\Lambda) \lesssim \Lambda^{n\left(\frac{1}{2k}+\frac{1}{2l}\right)}\,,\quad \text{as}\quad \Lambda \rightarrow \infty\,,
\end{equation}
cf. \cite[Theorem 3.2]{BBR}.

\section{Fourier analysis associated with the anharmonic oscillator} \label{Section3}
In this section we present the Fourier analysis associated with the eigenfunction expansion of the anharmonic oscillator $A$  as in \eqref{defn.anh}. The theory presented here is a natural analogue of Fourier analysis developed in \cite{Ruz-Tok} in the setting of  nonharmonic analysis associated with a model operator. Therefore we will not present any proofs for any of our statements which can be derived verbatim as in \cite{Ruz-Tok}. 

As before, we denote by $(u_j)_{j \in \mathbb{N}}$ the eigenvalues of the operator $A$, and by $(\lambda_j)_{j \in \mathbb{N}}$ the corresponding eigenvalues in increasing order. \\

Let us now define the space of functions
\begin{equation}\label{Eq:R-T-dom}
C^\infty_{A}(\mathbb{R}^n):=\cap_{m=1}^\infty \textnormal{Dom}(A^m)\,,
\end{equation}
where the domain of $A^m$ is as in \eqref{dom.Ar}, whose Fr\'echet topology is given by the family of norms
$$\|f\|_{C^m_{A}}:=\max_{n\leq m} \| A^n f\|_{L^2(\mathbb{R}^n)},\,\, m\in\mathbb{N}_0,\,\,f\in  C^\infty_{A}(\mathbb{R}^n).$$

Since $(u_j)_{j \in \mathbb{N}}$ is dense in $L^2(\mathbb{R}^n)$ we have  that $C^\infty_{A}(\mathbb{R}^n)$ is dense in $L^2(\mathbb{R}^n)$.\\
\\
\textbf{$A$-Fourier transform:}
Let $\mathcal{S}(\mathbb{N})$ be the space of rapidly decreasing functions $\phi:\mathbb{N}\to \mathbb{C}$, i.e., for any $N<\infty,$ there exists a constant $C_{\phi,N}$ such that $|\phi(\xi)|\leq C_{\phi,N}\langle \xi\rangle^{-N}\,\,\text{for all } \xi\in\mathbb{N}.$ The space  $\mathcal{S}(\mathbb{N})$ forms a Fr\'echet space if endowed with the family $\{p_r\}_{r \in \mathbb{N}}$ of semi-norms \[p_r(\phi):=\sup_{\xi\in\mathbb{N}} \langle\xi\rangle^r |\phi(\xi)|\,.\]
We define the \textit{$A$-Fourier transform} to be that bijective homeomorphism \[\mathcal{F}_{A}:C^\infty_{A}(\mathbb{R}^n)\to  \mathcal{S}(\mathbb{N})\] defined by 
\begin{equation}\label{Lfourier}
(\mathcal{F}_{A} f)(\xi):=\hat{f}(\xi):=\int\limits_{\mathbb{R}^n}  f(x)\overline{u_j(x)}\, dx.
\end{equation}
The  inverse operator $\mathcal{F}^{-1}_{A}: \mathcal{S}(\mathbb{N})\to  C^\infty_{A}( \mathbb{R}^n)$ is given by  $$(\mathcal{F}^{-1}_{A} h)(x):=\sum_{j \in \mathbb{N}} h(\xi) u_j(x)\,,$$ so that the Fourier inversion formula becomes
\begin{equation}
f(x)=\sum_{j \in \mathbb{N}} \hat{f}(\xi) u_j(x),\,\, f\in C^\infty_{A}(\mathbb{R}^n).
\end{equation} 

The Plancherel identity holds true for the $A$-Fourier transform of functions in the space $l^{2}_{A}$ that we define as the (linear) space of complex-valued sequences $a$ on $\mathbb{N}$ such that $\mathcal{F}^{-1}_{A}a \in L^2(\mathbb{R}^n)$. The latter means that there exists $f \in L^2(\mathbb{R}^n)$ such that $\mathcal{F}_{A}f=a$.
The space of sequences  $l^2_{A}$ is a Hilbert space under the norm \[\|a\|=\left(\sum_{j \in \mathbb{N}}|a(j)|^2\right)^{\frac{1}{2}}\,,\] since by formal calculations
$$(a,b)_{l^2_{\mathcal{A}}}=(\mathcal{F}^{-1}_{\mathcal{A}} a,\mathcal{F}^{-1}_{\mathcal{A}} b )_{L^2}\,,\quad a,b \in l^{2}_{\mathcal{A}}\,,$$ and the inner product is then given by
$$(a,b)_{l^{2}_{A}}:= \sum_{j \in\mathbb{N}} a(j)\overline{b(j)} \,,$$
for arbitrary $a,b \in l^{2}_{A}$. 
	For any $f \in L^2(\mathbb{R}^n)$ we have $\widehat{f} \in l^{2}_{A}$, and the Plancherel identity is satisfied
	\begin{equation}	\label{planch.A.ft}
	\|f\|_{L^2}=\|\widehat{f}\|_{l^{2}_{A}}\,.
	\end{equation}
Let us now define the generalisation of the space $l^{2}_{A}$ to be the space of functionals $a$ on $\mathcal{S}(\mathbb{N})$ denoted by $l^{p}_{A}$, for $1\leq p <\infty$, such that  
	\begin{equation}\label{EQ:norm1}
	\|a\|_{l^{p}({A})}:=\left(\sum_{j \in \mathbb{N}}| a(j)|^{p}
	\|u_j\|^{2-p}_{L^{\infty}(\mathbb{R}^n)} \right)^{1/p}<\infty\,.
	\end{equation}
	The space $l^{\infty}_{A}$ shall be defined as the set of $a \in \mathcal{S}^{'}(\mathbb{N})$ such that 
	$$
	\|a\|_{l^{\infty}(A)}:=\sup_{j \in\ind}\left( |a(j)|\cdot
	\|u_j\|^{-1}_{L^{\infty}(\mathbb{R}^n)}\right)<\infty.
	$$

The following theorem reads as the Hausdorff-Young inequality in our setting, and can be proved using standard interpolation properties as in [Theorem 7.6 \cite{Ruz-Tok}].  

\begin{theorem}[Hausdorff-Young inequality]\label{haus} Let $1 \leq p \leq 2$ and let $\frac{1}{p}+\frac{1}{p'}=1.$ Then, there is a constant $C_p\geq 1$ such that, for all $f \in L^p(\mathbb{R}^n)$ and for all $a \in l^{p}(A)$, we have 
\begin{equation}\label{haus.1}
\|\widehat{f}\|_{l^{p^{'}}(A)}\leq C_p \|f\|_{L^p}\,,
\end{equation}
and
\[
\|\mathcal{F}^{-1}_{A}a\|_{L^{p^{'}}}\leq C_p \|a\|_{l^{p}(A)}\,.
\]
\end{theorem}

\section{$L^p$-$L^q$ boundedness of Fourier multipliers  for $1<p\leq 2\leq q<\infty$}\label{LpLq}
In this section we present our boundedness results, first in a general form, and later on we specialise by considering particular examples of operators. Our analysis begins with proving our version of two well-known inequalities; namely, the Paley-type inequality, and the Hausdorff-Young-Paley-type inequality.

\subsection{Hausdorff-Young-Paley inequality}  

The Hausdorff-Young-Paley inequality is a generalisation of the Paley-type inequality that follows. For the proof of it interpolation techniques as in Corollary \ref{interpolationoperator} are necessary.

\begin{theorem}[Paley-type inequality]\label{PI}
Let  $1<p \leq 2$.  If $\varphi(j)$ is a positive sequence in $\mathbb{N}$ such that 
 $$M_\varphi:= \sup_{t>0} t  \sum_{\overset{j \in \mathbb{N}}{t \leq \varphi(\xi) }}  \|u_j\|_{L^\infty(\mathbb{R}^n)}^2  <\infty,  $$
 then for every $f \in {L^p(\mathbb{R}^n)}$ we have
\begin{equation} \label{Vish5.1}
    \left( \sum_{j \in \mathbb{N}} |\mathcal{F}_{A}(f)(j)|^p \|u_j\|_{L^\infty(\mathbb{R}^n)}^{2-p}  \varphi(j)^{2-p}   \right)^{\frac{1}{p}} \lesssim M_\varphi^{\frac{2-p}{p}} \|f\|_{L^p(\mathbb{R}^n)}.
\end{equation}
\end{theorem}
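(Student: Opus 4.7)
The plan is to prove \eqref{Vish5.1} by Marcinkiewicz interpolation between a strong $L^2$-estimate coming from Plancherel and a weak-type $(1,1)$ estimate whose constant is controlled by $M_\varphi$. To bring both endpoints into the same functional framework, I introduce the discrete measure $\sigma$ on $\mathbb{N}$ given by
\[
\sigma(\{j\}):=\|u_j\|_{L^\infty(\mathbb{R}^n)}^{2}\,\varphi(j)^{2}\,,
\]
together with the (sub)linear operator
\[
(Sf)(j):=\frac{\mathcal{F}_{A_{k,l}}(f)(j)}{\|u_j\|_{L^\infty(\mathbb{R}^n)}\,\varphi(j)}\,.
\]
A direct rearrangement shows that \eqref{Vish5.1} is equivalent to the mapping property $\|Sf\|_{L^p(\mathbb{N},\sigma)}\lesssim M_\varphi^{(2-p)/p}\|f\|_{L^p(\mathbb{R}^n)}$, which is what the interpolation will deliver.

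The $L^2$-endpoint is essentially automatic: one computes $\|Sf\|_{L^2(\sigma)}^{2}=\sum_{j\in\mathbb{N}}|\mathcal{F}_{A_{k,l}}(f)(j)|^{2}=\|f\|_{L^2(\mathbb{R}^n)}^{2}$, which is the $p=2$ case of Theorem \ref{haus}. The real work is the weak-type $(1,1)$ estimate with constant of order $M_\varphi$. Starting from the trivial bound $|\mathcal{F}_{A_{k,l}}(f)(j)|\leq\|u_j\|_{L^\infty}\|f\|_{L^1}$, one gets $|Sf(j)|\leq\|f\|_{L^1}/\varphi(j)$, so that, setting $\alpha:=\|f\|_{L^1}/y$,
\[
\{j\in\mathbb{N}:|Sf(j)|>y\}\subseteq\{j\in\mathbb{N}:\varphi(j)<\alpha\}\,.
\]
The task then reduces to bounding $\sigma(\{j:\varphi(j)<\alpha\})=\sum_{\varphi(j)<\alpha}\|u_j\|_{L^\infty}^{2}\varphi(j)^{2}$ by a constant multiple of $M_\varphi\alpha$. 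The natural tool is the layer-cake identity $\varphi(j)^{2}=\int_{0}^{\varphi(j)^{2}}ds$: after exchanging the sum and the integral one is reduced to integrating $s\mapsto\sum_{\varphi(j)\geq\sqrt{s}}\|u_j\|_{L^\infty}^{2}$ over $s\in(0,\alpha^{2})$, and the standing hypothesis bounds this integrand by $M_\varphi/\sqrt{s}$, producing the clean bound $2M_\varphi\alpha$.

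This last step is where I expect the main subtlety to lie: the measure $\sigma$ weighs $\varphi$ quadratically while the hypothesis on $M_\varphi$ is only a weak $L^{1}$-type control of $\varphi$ against the weights $\|u_j\|_{L^\infty}^{2}$, so the layer-cake/Fubini bookkeeping needs to be lined up carefully with the exponent $2$ on $\varphi(j)$. Everything else is routine. Once the weak $(2,2)$ bound of constant $1$ and the weak $(1,1)$ bound of constant $\lesssim M_\varphi$ are in hand, the Marcinkiewicz interpolation theorem applied with $\theta=2(p-1)/p$, so that $1/p=(1-\theta)+\theta/2$, produces the strong $(p,p)$ bound for $S$ with constant of order $M_\varphi^{1-\theta}=M_\varphi^{(2-p)/p}$, which is exactly \eqref{Vish5.1} for every $1<p<2$. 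The boundary case $p=2$ is simply the Plancherel identity used above, so the full range $1<p\leq 2$ is covered.
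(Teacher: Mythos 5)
Your proposal is correct and follows essentially the same route as the paper's proof: the same weighted measure $\nu(\{j\})=\varphi(j)^2\|u_j\|_{L^\infty}^2$, the same sublinear operator, the strong $(2,2)$ bound via Plancherel, the weak $(1,1)$ bound via the trivial $L^1$--$L^\infty$ estimate combined with the layer-cake/Fubini computation giving $2M_\varphi\alpha$, and Marcinkiewicz interpolation with the same choice of $\theta$. No substantive differences to report.
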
 
\begin{proof} 	Let $\nu$ be the measure on $\mathbb{N}$ defined by $\nu(\{j\}):= \varphi^2(j) \|u_j \|_{L^\infty(\mathbb{R}^n)}^2 $ for $j \in \mathbb{N}.$ Now, we define weighted spaces $L^p(\mathbb{N}, \nu),$ $1 \leq p \leq 2,$ as the spaces of complex (or real) sequences $a=\{a_j\}_{j \in \mathbb{N}}$ such that 
	\begin{equation}
	\|a\|_{L^p(\mathbb{N}, \nu)}:= \left( \sum_{j \in \mathbb{N}} |a_j|^p \varphi^2(j)\, \|u_j\|_{L^\infty(\mathbb{R}^n)}^2  \right)^\frac{1}{p}<\infty.
	\end{equation}
	We show that the subadditive operator 
	$T:L^p(\mathbb{R}^n): \rightarrow L^p(\mathbb{N}, \nu)$ defined by  $$ Tf:= \left\{ \frac{|\mathcal{F}_{A}(f)(j)|} {\|u_j\|_{L^\infty(\mathbb{R}^n)}  \varphi(j)} \right\}_{j \in \mathbb{N}}$$ is well-defined and bounded from $L^p(\mathbb{R}^n)$ to $L^p(\mathbb{N}, \nu)$ for $1<p \leq 2.$ In other words, we claim that we have the estimate 
	\begin{align} \label{vish5.3}
	\| Tf \|_{L^p(\mathbb{N}, \,\nu)} &= \left( \sum_{j \in \mathbb{N}} \left( \frac{|\mathcal{F}_{A}(f)(j)|}{\|u_j\|_{L^\infty(\mathbb{R}^n)}  \varphi(j)} \right)^p \varphi^2(j) \|u_j \|_{L^\infty(\mathbb{R}^n)}^2 \right)^\frac{1}{p} \nonumber \\& \lesssim M_\varphi^{\frac{2-p}{p}} \|f\|_{L^p(\mathbb{R}^n)},
	\end{align} 
	which would give us \eqref{Vish5.1} and where we set $$M_\varphi:= \sup_{t>0} t \sum_{\overset{j \in \mathbb{N}}{   \varphi(j) \geq t}} \|u_j\|_{L^\infty(\mathbb{R}^n)}^2 .$$ To prove this we will show that $T$ is of weak type $(2,2)$ and of weak type $(1,1).$ More precisely, for the distribution function,  
	$$\nu_{\mathbb{N}}(y; Tf)= \sum_{\overset{j \in \mathbb{N}}{|Tf (j)|\geq  y}} \|u_j\|_{L^\infty(\mathbb{R}^n)}^2   \varphi^2(j)\,,\quad \text{where}\quad y \in \mathbb{N}$$
	we show that 
	\begin{equation} \label{vish5.4}
	\nu_{\mathbb{N}}(y; Tf) \leq \left( \frac{M_2 \|f\|_{L^2(\mathbb{R}^n)}}{y} \right)^2 \,\,\,\,\,\,\text{with norm}\,\, M_2=1,
	\end{equation}
	and 
	\begin{equation} \label{vish5.5}
	\nu_{\mathbb{N}}(y; Tf) \leq \frac{M_1 \|f\|_{L^1(\mathbb{R}^n)}}{y}\,\,\,\,\,\,\text{with norm}\,\, M_1=M_\varphi.
	\end{equation} 
	Then \eqref{vish5.3} will follows by the Marcinkiewicz interpolation theorem, cf. \cite[Theorem 1.1.3]{BL}. Now, to show \eqref{vish5.4}, using the Plancherel identity as in Proposition \ref{planch.A.ft} we get 
	\begin{align}
	\label{alsostrong}
	y^2 \nu_{\mathbb{N}}(y; Tf)&\leq  \|Tf\|^2_{L^2(\mathbb{N}, \nu)}= \sum_{j \in \mathbb{N}} \left( \frac{|\mathcal{F}_{A}(f)(j)|}{\varphi(j) {\|u_j\|_{L^\infty(\mathbb{R}^n)} } } \right)^2 \varphi^2(j) \|u_j\|_{L^\infty(\mathbb{R}^n)}^2 \nonumber  \\&= \sum_{j \in \mathbb{N}} |\mathcal{F}_{A}(f)(j)|^2 = \|\mathcal{F}_{A}(f)\|_{l^2(L)}^2= \|f\|_{L^2(\mathbb{R}^n)}^2.
	\end{align}
	Thus, $T$ is of weak type $(2,2)$ with norm $M_2 \leq 1.$ Further, we show that $T$ is of weak type $(1,1)$ with norm $M_1=M_\varphi$; more precisely, we show that 
	\begin{equation}
	\nu_{\mathbb{N}} \left\{j \in \mathbb{N}: \frac{|\mathcal{F}_{A}(f)(j)|}{\varphi(j) {\|u_j\|_{L^\infty(\mathbb{R}^n)} }}>y \right\} \lesssim M_\varphi \frac{\|f\|_{L^1(\mathbb{R}^n)}}{y}.
	\end{equation}
	Here, the left hand-side is the weighted sum $\sum \varphi^2(j) \|u_j\|_{L^\infty(\mathbb{R}^n)}^2 $ taken over those $j \in \mathbb{N}$ such that $\frac{|\mathcal{F}_{A}(f)(j)|}{\varphi(j) {\|u_j\|_{L^\infty(\mathbb{R}^n)} }}>y.$ From the definition of the Fourier transform and H\"{o}lder's inequality it follows that 
	$$|\mathcal{F}_{A}(f)(j)| \leq  \|u_j\|_{L^\infty(\mathbb{R}^n)} \|f\|_{L^1(\mathbb{R}^n)}.$$
	Therefore, we get 
	$$ y<\frac{|\mathcal{F}_{A}(f)(j)|}{\varphi(j) {\|u_j\|_{L^\infty(\mathbb{R}^n)} }} \leq \frac{\|f\|_{L^1(\mathbb{R}^n)}}{\varphi(j)  }.$$ 
	Using this, we get 
	$$ \left\{j \in \mathbb{N}: \frac{|\mathcal{F}_{A}(f)(j)|}{\varphi(j) {\|u_j\|_{L^\infty(\mathbb{R}^n)} }}>y \right\} \subset \left\{j \in \mathbb{N}: \frac{\|f\|_{L^1(\mathbb{R}^n)}}{\varphi(j)}>y \right\} $$ for any $y>0.$ Consequently, 
	$$\nu\left\{j \in \mathbb{N}: \frac{|\mathcal{F}_{A}(f)(j)|}{\varphi(j) {\|u_j\|_{L^\infty(\mathbb{R}^n)} }}>y \right\} \leq \nu \left\{j \in \mathbb{N}: \frac{\|f\|_{L^1(\mathbb{R}^n)}}{\varphi(j)}>y \right\}.$$
	By setting $w:= \frac{\|f\|_{L^1(\mathbb{R}^n)}}{y},$ we get 
	\begin{equation}
	\nu\left\{j \in \mathbb{N}: \frac{|\mathcal{F}_{A}(f)(j)|}{\varphi(j) }>y \right\} \nonumber
	\leq \sum_{\overset{j \in \mathbb{N}}{\varphi(j)  \leq w}} \varphi^2(j) \|u_j\|^2_{L^\infty(\mathbb{R}^n)} .
	\end{equation}
	We claim that 
	\begin{equation}\label{vish5.8}
	\sum_{\overset{j \in \mathbb{N}}{\varphi(j)  \leq w}} \varphi^2(j) \|u_j\|^2_{L^\infty(\mathbb{R}^n)}  \lesssim M_\varphi w.
	\end{equation}
	In fact, we have 
	$$ \sum_{\overset{j \in \mathbb{N}}{\varphi(j)  \leq w}} \varphi^2(j) \|u_j\|^2_{L^\infty(\mathbb{R}^n)}  =\sum_{\overset{j \in \mathbb{N}}{\varphi(j)  \leq w}}  \|u_j\|^2_{L^\infty(\mathbb{R}^n)}  \int_{0}^{\varphi^2(j)} d\tau.$$
	
	We can interchange sum and integration  to get 
	$$ \sum_{\overset{j \in \mathbb{N}}{\varphi(j)  \leq w}}  \|u_j\|^2_{L^\infty(\mathbb{R}^n)}  \int_{0}^{\varphi^2(j)} d\tau \leq \int_{0}^{w^2} d\tau \sum_{\overset{j \in \mathbb{N}}{\tau^{\frac{1}{2}} \leq \varphi(j)  \leq w}}  \|u_j\|^2_{L^\infty(\mathbb{R}^n)} .$$
	Further, we make a substitution $\tau =t^2,$ yielding
	$$\int_{0}^{w^2} d\tau \sum_{\overset{j \in \mathbb{N}}{\tau^{\frac{1}{2}} \leq \varphi(j)  \leq w}}  \|u_j\|^2_{L^\infty(\mathbb{R}^n)} =2 \int_{0}^{w } t dt \sum_{\overset{j \in \mathbb{N}}{t \leq \varphi(j)  \leq w}} \|u_j\|^2_{L^\infty(\mathbb{R}^n)} \leq 2 \int_{0}^{w } t \,dt \sum_{\overset{j \in \mathbb{N}}{t \leq \varphi(j)  }}  \|u_j\|^2_{L^\infty(\mathbb{R}^n)}.$$

	Now, since $$ t \sum_{\overset{j \in \mathbb{N}}{t \leq \varphi(j) }}  \|u_j\|^2_{L^\infty(\mathbb{R}^n)}   
	\leq \sup_{t>0} t  \sum_{\overset{j \in \mathbb{N}}{t \leq \varphi(j) }}  \|u_j\|^2_{L^\infty(\mathbb{R}^n)}   = M_\varphi$$
	is finite by assumption, we have
	$$   2 \int_{0}^{w } t dt \sum_{\overset{j \in \mathbb{N}}{t \leq \varphi(j) }}  \|u_j\|^2_{L^\infty(\mathbb{R}^n)}  \lesssim M_\varphi w= \frac{M_\varphi\|f\|_{L^1(\mathbb{R}^n)}}{y} . $$
	This proves \eqref{vish5.8}. Therefore, we have proved inequality \eqref{vish5.4} and \eqref{vish5.5}.  Then by using the Marcinkiewicz interpolation theorem we obtain that for $1<p<2$
	\begin{align*} 
	\left( \sum_{j \in \mathbb{N}} \left( \frac{|\mathcal{F}_{\mathcal{A}}(f)(j)|}{\|u_j\|_{L^\infty(\mathbb{R}^n)} \varphi(j)} \right)^p  \|u_j\|^2_{L^\infty(\mathbb{R}^n)}  \varphi(j)^{2}  \right)^{\frac{1}{p}}  = \|Tf\|_{L^p(\mathbb{N}, \nu)} \lesssim_{p}  M_\varphi^{\frac{2-p}{p}} \|f\|_{L^p(\mathbb{R}^n)}\,.
	\end{align*} 
	At the same time \eqref{alsostrong} suggest that the last inequality holds true for $p=2$ as an equality. The proof is now complete.

\end{proof}

The following theorem, see e.g. \cite{BL}, is an interpolation theorem of $L^p$ spaces with change of measure.

\begin{theorem} \label{interpolation} Let $d\mu_0(x)= \omega_0(x) d\mu(x),$ $d\mu_1(x)= \omega_1(x) d\mu(x)$. Let us also write $L^p(\omega)=L^p(\omega d\mu)$ for the weight $\omega.$ Suppose that $0<p_0, p_1< \infty.$ Then 
	$$(L^{p_0}(\omega_0), L^{p_1}(\omega_1))_{\theta, p}=L^p(\omega)\,,\quad 0<\theta<1\,,$$ where $\frac{1}{p}= \frac{1-\theta}{p_0}+\frac{\theta}{p_1}$ and $\omega= \omega_0^{\frac{p(1-\theta)}{p_0}} \omega_1^{\frac{p\theta}{p_1}}.$
\end{theorem}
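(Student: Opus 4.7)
The plan is to verify this classical identification via the real $K$-method of interpolation. Recall that for any compatible couple $(A_0, A_1)$, the norm on $(A_0, A_1)_{\theta, p}$ is
\[
\|f\|_{(A_0,A_1)_{\theta,p}}^p = \int_0^\infty \bigl(t^{-\theta} K(t,f;A_0,A_1)\bigr)^p \frac{dt}{t},
\]
where $K(t,f;A_0,A_1) = \inf_{f=f_0+f_1}\bigl(\|f_0\|_{A_0} + t\|f_1\|_{A_1}\bigr)$. So the task reduces to showing that, for $A_j = L^{p_j}(\omega_j)$, this quantity is equivalent to $\|f\|_{L^p(\omega)}^p$ with the claimed weight $\omega = \omega_0^{p(1-\theta)/p_0}\omega_1^{p\theta/p_1}$.

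First I would obtain two-sided pointwise estimates for $K(t,f;L^{p_0}(\omega_0), L^{p_1}(\omega_1))$ via a level-set truncation: set $f_1 = f\chi_{E_t}$ and $f_0 = f\chi_{E_t^c}$ on a level set $E_t = \{x : |f(x)|^{p_0-p_1}\omega_0(x)/\omega_1(x) \leq c\,t^{p_1}\}$ chosen so that the two pieces of $\|f_0\|_{L^{p_0}(\omega_0)} + t\|f_1\|_{L^{p_1}(\omega_1)}$ are balanced at the threshold. The upper bound on $K(t,f)$ comes directly from this explicit decomposition; the matching lower bound (up to universal constants) follows from the elementary fact that in any decomposition $f = f_0 + f_1$, pointwise at least one of $|f_j(x)|$ dominates $|f(x)|/2$. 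Substituting the resulting expression for $K(t,f)$ into the $K$-method integral and interchanging the $t$- and $\mu$-integrations by Fubini, the inner $t$-integral (for fixed $x$) reduces, via the substitution $t = \tau\,s(x)$ where $s(x)$ is the homogeneous quantity balancing the two pieces at $x$, to a universal constant times $|f(x)|^p \omega(x)$. The homogeneity of this substitution forces the weight $\omega$ to appear with exponents $p(1-\theta)/p_0$ on $\omega_0$ and $p\theta/p_1$ on $\omega_1$, matching the stated formula.

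The main technical obstacle is the bookkeeping of exponents and multiplicative constants through the pointwise $K$-functional estimate and the subsequent change of variables, especially since $p_0 \neq p_1$ in general rules out an exact pointwise extremizer and one must content oneself with comparable upper and lower bounds. A cleaner alternative avoids the $K$-functional altogether: for Banach lattices of $L^p$-type over a common measure space, the real interpolation space $(L^{p_0}(\omega_0), L^{p_1}(\omega_1))_{\theta,p}$ is known to coincide with the Calder\'on product $(L^{p_0}(\omega_0))^{1-\theta}(L^{p_1}(\omega_1))^\theta$, and a single application of H\"older's inequality with the conjugate exponents $p_0/((1-\theta)p)$ and $p_1/(\theta p)$ --- conjugate precisely by the relation $1/p = (1-\theta)/p_0 + \theta/p_1$ --- identifies this Calder\'on product with $L^p(\omega)$ for the weight $\omega$ as above.
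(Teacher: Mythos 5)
The paper does not actually prove this statement: it is imported verbatim from Bergh--L\"ofstr\"om \cite{BL} (the classical theorem on real interpolation of weighted $L^p$ spaces with change of weights) and used only as a black box to derive Corollary \ref{interpolationoperator}. So there is no in-paper proof to compare against; you are supplying an argument where the authors supply a citation. Your overall strategy --- estimating $K(t,f;L^{p_0}(\omega_0),L^{p_1}(\omega_1))$ by level-set truncations and then integrating against $t^{-\theta p}\,dt/t$ --- is indeed the standard route to this result.

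As written, though, the sketch has one genuine gap, in the lower bound for the $K$-functional. The observation that in any splitting $f=f_0+f_1$ one has $|f_0(x)|\geq|f(x)|/2$ or $|f_1(x)|\geq|f(x)|/2$ only shows that $K(t,f)$ is comparable to $\inf_{E}\bigl[\,(\int_{E}|f|^{p_0}\omega_0\,d\mu)^{1/p_0}+t(\int_{E^c}|f|^{p_1}\omega_1\,d\mu)^{1/p_1}\bigr]$, the infimum running over all measurable sets $E$ (which depend on the competing decomposition). It does not show that your particular level set $E_t$ nearly attains this infimum: $E_t$ is the pointwise minimizer of the additive functional $\int_{E}|f|^{p_0}\omega_0\,d\mu+t^{p_1}\int_{E^c}|f|^{p_1}\omega_1\,d\mu$ (an $L$-functional), and for $p_0\neq p_1$ the outer exponents $1/p_0$, $1/p_1$ destroy additivity, so passing from the $L$-functional back to $K$ requires a Holmstedt-type computation or, more cleanly, the power theorem of \cite{BL}, which reduces everything to the couple $(L^1(\omega_0),L^1(\omega_1))$ where $K(t,f)=\int|f|\min(\omega_0,t\omega_1)\,d\mu$ exactly. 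The upper bound on $K$ alone does not rescue the inclusion $(L^{p_0}(\omega_0),L^{p_1}(\omega_1))_{\theta,p}\subseteq L^p(\omega)$, and the log-convexity inequality $\|f\|_{L^p(\omega)}\leq\|f\|_{L^{p_0}(\omega_0)}^{1-\theta}\|f\|_{L^{p_1}(\omega_1)}^{\theta}$ only yields the embedding of the smaller space $(\,\cdot\,)_{\theta,1}$, so a genuine lower bound on $K$ is unavoidable. Your alternative via Calder\'on products hides the same difficulty inside the cited identification of $(X_0,X_1)_{\theta,p}$ with $X_0^{1-\theta}X_1^{\theta}$ on the diagonal $1/p=(1-\theta)/p_0+\theta/p_1$, which for these couples is essentially equivalent to the theorem being proved; only the final H\"older step is elementary. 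Given that the authors simply cite \cite{BL}, the cleanest course is to do the same, or to route your argument through the power theorem.
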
 

As a corollary of Theorem \ref{interpolation} we get the following extension (cf. [Corollary 5.5.4 \cite{BL}] ) of the interpolation theorem of Stein-Weiss.

\begin{corollary}[Stein-Weiss]\label{interpolationoperator} 	Assume that $1 \leq p_0,p_1,q_0,q_1<\infty$, and that 
	\[
	T:L^{p_0}(\omega_{0}d\mu)\rightarrow L^{q_0}(\tilde{\omega_0}d\nu)\,,
	\]
	\[
	T:L^{p_1}(\omega_1 d\mu)\rightarrow L^{q_1}(\tilde{\omega_1}d\nu)\,,
	\]
	with norms $M_0$ and $M_1$, respectively. Then 
	\[
	T:L^p(\omega d\mu)\rightarrow L^q(\tilde{\omega}d\nu)\,,
	\]
	with norm $M$ such that 
	\[
	 M \leq M_{0}^{1-\theta}M_{1}^{\theta}\,,
	\] 
	where 
	\[	
	\frac{1}{p}=\frac{1-\theta}{p_0}+\frac{\theta}{p_1}\,,\quad \frac{1}{q}=\frac{1-\theta}{q_0}+\frac{\theta}{q_1}\,,\]
	\[
	\omega=\omega_{0}^{p(1-\theta)/p_1}\omega_{1}^{p\theta/p_1}\,,\quad \tilde{\omega}=\tilde{\omega_{0}}^{q(1-\theta)/q_0}\tilde{\omega_{1}}^{q\theta/q_1}\,.
	\]
\end{corollary} 

Using the above, we can now prove a version of the Hausdorff-Young-Paley inequality adapted to our setting.

\begin{theorem}[Hausdorff-Young-Paley inequality] \label{HYP} Let $1<p\leq 2,$ and let   $1<p \leq b \leq p' \leq \infty,$ where $p'= \frac{p}{p-1}$.  If $\varphi(j)$ is a positive sequence in $\mathbb{N}$ such that 
	$$M_\varphi:= \sup_{t>0} t  \sum_{\overset{j \in \mathbb{N}}{t \leq \varphi(j) }}  \|u_j\|^2_{L^\infty(\mathbb{R}^n)}    $$
	is finite, then for every $f \in {L^p(\mathbb{R}^n)}$ 
	we have
	\begin{equation} \label{Vish5.9}
	\left( \sum_{j \in \mathbb{N}}  \left( |\mathcal{F}_{A}f(j)| \varphi(j)^{\frac{1}{b}-\frac{1}{p'}} \right)^b \|u_j\|_{L^\infty(\mathbb{R}^n)}^{2-b}       \right)^{\frac{1}{b}} \lesssim M_\varphi^{\frac{1}{b}-\frac{1}{p'}} \|f\|_{L^p(\mathbb{R}^n)}.
	\end{equation}
\end{theorem}
\begin{proof}
		By the Paley-type inequality as in Theorem \ref{PI}, the operator $T$ defined by 
	$$Tf(j):= \left\{ \frac{\mathcal{F}_{\mathcal{A}}f(j)}{\|u_j\|_{L^\infty(\mathbb{R}^n)} } \right\}_{j \in \mathbb{N}},$$ 
	is bounded from $L^p(\mathbb{R}^n)$ to $L^{p}(\mathbb{N},\omega_0),$ where $\omega_{0}(j)=\|u_j\|^2_{L^\infty(\mathbb{R}^n)}  \varphi(j)^{2-p}
	$, and the space $L^p(\mathbb{N},\omega)$ has been endowed with the norm $$\|\cdot\|_{L^p(\mathbb{N}, \omega)}:= \left( \sum_{j \in \mathbb{N}} |\cdot(j)|^p w(j)  \right)^{\frac{1}{p}}\,,\quad $$
	From Hausdroff-Young inequality as in Theorem \ref{haus}, we deduce that $T:L^p(\mathbb{R}^n) \rightarrow L^{p'}(\mathbb{N}, \omega_1)$ with   $\omega_1(j)= \|u_j\|^{2}_{L^\infty(\mathbb{R}^n)},$  admits a bounded extension, while, by Corollary \ref{interpolationoperator}, the operator $T:L^p(\mathbb{R}^n) \rightarrow L^{b}(\mathbb{N}, \omega),$ is continuous for $p \leq b\leq p^{'}$ such that $\frac{1}{b}=\frac{1-\theta}{p}+\frac{\theta}{p'}$, where $\theta \in (0,1)$. By fixing $\theta=\frac{p-b}{b(p-2)},$ we get 
		\begin{equation*}
	\omega= \omega_0^{\frac{p(1-\theta)}{p_0}} \omega_1^{\frac{p\theta}{p_1}}= \varphi(j)^{1-\frac{b}{p'}} \|u_j\|_{L^\infty(\mathbb{R}^n)}^{2-b}\,, 
	\end{equation*}
so that for the operator norm we have $\|T\|_{L^p \rightarrow L^b} \leq M_{\varphi}^{\frac{1}{b}-\frac{1}{p^{'}}}\,.$
	The proof is now complete.
 \end{proof}
Naturally, the Hausdorff-Young Paley inequality \eqref{Vish5.9} reduces to the Hausdorff-Young inequality \eqref{haus.1} when $b=p^{'}$ and to the Paley-type inequality \eqref{Vish5.1} when $b=p$.
\subsection{$L^p$-$L^q$ boundedness} 
In this paragraph we prove the $L^p$-$L^q$ boundedness of Fourier multipliers in our setting. Analogous results have been proved in the case of the torus in \cite{NT100} using a different method. We recall here that $m$ is an $A$-Fourier multiplier on $\mathbb{R}^n$ if it satisfies 
$$\mathcal{F}_{A}({mf})(j)= \sigma_{m}(j) \mathcal{F}_{A}{f}(j),\,\,\,\, j \in \mathbb{N},$$
 where $\sigma_{m} :\mathbb{N} \rightarrow \mathbb{C}$ is a function that shall be called the $A$-symbol, or simply symbol, of the multiplier $m$.
\begin{theorem}\label{Th:LpLq-1}
Let $1<p \leq 2 \leq q <\infty.$  Suppose that $m$ is an $A$-Fourier multiplier with symbol $\sigma_{m}$ on $\mathbb{R}^n.$ Then we have 
$$ \|m\|_{L^p(\mathbb{R}^n) \rightarrow L^q(\mathbb{R}^n)} \lesssim \sup_{s>0} s\left(  \ \sum_{\overset{j \in \mathbb{N}}{|\sigma_m(j)| >s}} \|u_j\|^2_{L^\infty(\mathbb{R}^n)}  \right)^{\frac{1}{p}-\frac{1}{q}}.$$
\end{theorem}
\begin{proof} 
Let us first assume that $p \leq q',$ where $\frac{1}{q}+\frac{1}{q'}=1.$ Since $q' \leq 2,$ the Hausdorff-Young inequality Theorem \ref{haus} gives that 
	\begin{align}\label{byhaus}
	\|mf\|_{L^q(\mathbb{R}^n)} & \leq  \|\mathcal{F}_{A}(mf)\|_{\ell^{q'}(A)}=\|\sigma_{m} \mathcal{F}_{A}(f)\|_{\ell^{q'}(A)}\nonumber\\& = \left( \sum_{j \in \mathbb{N}} |\sigma_{m}(j) |^{q'} |\mathcal{F}_{A}(f)(j)|^{q'} \|u_j\|^{2-q'}_{L^\infty(\mathbb{R}^n)} \right)^{\frac{1}{q'}}.
	\end{align}
	
	The case $q' \leq (p')'=p$ can be reduced to the case $p \leq q'$ as follows. Using the duality of $L^p$-spaces we have $\|m\|_{L^p(\mathbb{R}^n) \rightarrow L^q(\mathbb{R}^n)}= \|m^*\|_{L^{q'}(\mathbb{R}^n) \rightarrow L^{p'}(\mathbb{R}^n)}.$ The symbol $\sigma_{m^*}(j)$ of the adjoint operator $m^*,$ which is also an $A$-Fourier multiplier, is equal to $\overline{\sigma_{m}(j)}$ (cf. [Proposition 9.7 \cite{Ruz-Tok}]), and obviously we have $|\sigma_{m}(j)|= |\sigma_{m^*}(j)|$. Therefore \eqref{byhaus} holds true for $1<p \leq 2 \leq q <\infty$.
	Now, we are in a position to apply Hausdorff-Young-Paley inequality Theorem \ref{HYP}. Set $\frac{1}{p}-\frac{1}{q}=\frac{1}{r}.$ Then, applying Theorem \ref{HYP} with $\varphi(j)= |\sigma_{m}(j)|^r$ with $b=q'$ we get 
	$$\|\sigma_{m}\mathcal{F}_{A}(f)\|_{\ell^{q'}(A)} \lesssim \left(  \sup_{s>0} s \sum_{\overset{j \in \mathbb{N}}{|\sigma_m(j)|^r>s}} \|u_j\|^2_{L^\infty(\mathbb{R}^n)}   \right)^{\frac{1}{r}} \|f\|_{L^p(\mathbb{R}^n)} $$ for all $f \in L^p(\mathbb{R}^n),$ in view of $\frac{1}{p}-\frac{1}{q}=\frac{1}{q'}-\frac{1}{p'}=\frac{1}{r.}$ Thus, for $1<p \leq 2 \leq q<\infty,$ we obtain 
	
	$$\|mf\|_{L^q(\mathbb{R}^n)} \lesssim \left(  \sup_{s>0} s \sum_{\overset{j \in \mathbb{N}}{|\sigma_m(j)|^r>s}} \|u_j\|^2_{L^\infty(\mathbb{R}^n)}  \right)^{\frac{1}{r}} \|f\|_{L^p(\mathbb{R}^n)}\,, $$
	and this completes the proof Theorem \ref{Th:LpLq-1} under the observation that
	\begin{align*}
	& \left(  \sup_{s>0} s \sum_{\overset{j \in \mathbb{N}}{ |\sigma_m(j)|^r>s}}\|u_j\|^2_{L^\infty(\mathbb{R}^n)}  \right)^{\frac{1}{r}} =  \left(  \sup_{s>0} s \sum_{\overset{j \in \mathbb{N}}{|\sigma_m(j)|  >s^{\frac{1}{r}}}} \|u_j\|^2_{L^\infty(\mathbb{R}^n)}  \right)^{\frac{1}{r}} \\&= \left(  \sup_{s>0} s^r \sum_{\overset{j \in \mathbb{N}}{|\sigma_m(j)| >s}} \|u_j\|^2_{L^\infty(\mathbb{R}^n)}  \right)^{\frac{1}{r}} = \sup_{s>0} s\left(  \ \sum_{\overset{j \in \mathbb{N}}{|\sigma_m(j)| >s}} \|u_j\|^2_{L^\infty(\mathbb{R}^n)}  \right)^{\frac{1}{r}}\,.
	\end{align*} 
  \end{proof}
  
 %To state the following corollary we define $$\tau(E_{(0, u)}(A_{k,l})):= \sum_{\overset{\xi \in \mathbb{N}}{\lambda_\xi <u}} \|u_j\|^2_{L^\infty(\mathbb{R}^n)},$$ which can be thought of as trace of  spectral projection of $A_{k,l}$ with the weight.
 
 The following corollaries present  $L^p$-$L^q$ boundedness results for particular cases of spectral multipliers associated with the anharmonic oscillator. 
 
 \begin{corollary}\label{cor.anh.1}
  Let $1<p\leq 2 \leq q<\infty$, and let $\varphi$ be any decreasing function on $[1,\infty)$ such that
 $\lim_{u \rightarrow \infty} \varphi(u)=0$. Then,
  \begin{equation}
  \label{cor.1.estim}
  \|\varphi(A)\|_{op} \lesssim \sup_{u>1} \varphi(u)\left(u^{1+\frac{(k+l)n}{2kl}}\right)^{\frac{1}{p}-\frac{1}{q}}\,, 
  \end{equation}
   where  $\|\cdot \|_{op}$ denotes the operator norm from $L^{p}(\mathbb{R}^n)$ to $L^{q}(\mathbb{R}^n)$.
 \end{corollary}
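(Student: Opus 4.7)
The plan is to reduce the corollary to Theorem \ref{Th:LpLq-1} (equivalently, Theorem \ref{Multiintro}) by recognising $\varphi(A_{k,l})$ as the $A_{k,l}$-Fourier multiplier with symbol $\sigma_A(j) = \varphi(\lambda_j)$. This leaves me to estimate the quantity
\[
\sup_{s>0} s \Bigl(\, \sum_{j : |\varphi(\lambda_j)| \geq s} \|u_j\|^2_{L^\infty(\mathbb{R}^n)} \Bigr)^{\frac{1}{p}-\frac{1}{q}},
\]
and I expect the whole task to come down to two geometric inputs: a pointwise bound on the eigenfunctions, and the Weyl-type counting bound \eqref{eig.count.f}.

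First I would bound $\|u_j\|_{L^\infty}$ in terms of $\lambda_j$. Proposition \ref{Sob.emb} gives a continuous embedding $H(M_{k,l},g^{k,l}) \hookrightarrow L^\infty(\mathbb{R}^n)$. Since the Weyl symbol of $A_{k,l}$ lies in $S(M_{k,l},g^{k,l})$ and is elliptic, the norm on $H(M_{k,l},g^{k,l})$ is equivalent to $\|\cdot\|_{L^2} + \|A_{k,l}\cdot\|_{L^2}$; applied to the normalised eigenfunction $u_j$ this gives $\|u_j\|_{L^\infty} \lesssim 1 + \lambda_j \lesssim \lambda_j$, using that $A_{k,l} \geq 1$ and so $\lambda_j \geq 1$.

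Next, because $\varphi$ is decreasing on $[1,\infty)$ with $\lim_{u\to\infty}\varphi(u)=0$, for any $s > 0$ the level set $\{j : |\varphi(\lambda_j)| \geq s\}$ is contained in $\{j : \lambda_j \leq u_0\}$ for $u_0 := \sup\{u \geq 1 : \varphi(u) \geq s\}$. Combining with the pointwise bound and the counting function estimate \eqref{eig.count.f}, I obtain
\[
\sum_{j : |\varphi(\lambda_j)| \geq s} \|u_j\|^2_{L^\infty(\mathbb{R}^n)}
\;\lesssim\; \sum_{\lambda_j \leq u_0} \lambda_j^2
\;\leq\; u_0^{\,2}\, N(u_0)
\;\lesssim\; u_0^{\,2 + \frac{(k+l)n}{2kl}}.
\]

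Finally I parametrise $s = \varphi(u_0)$ with $u_0 > 1$, which turns the supremum over $s > 0$ into a supremum over $u_0 > 1$, yielding
\[
\|\varphi(A_{k,l})\|_{L^p \to L^q} \;\lesssim\; \sup_{u>1} \varphi(u)\, \bigl(u^{2 + \frac{(k+l)n}{2kl}}\bigr)^{\frac{1}{p}-\frac{1}{q}},
\]
as required. The main obstacle I anticipate is justifying cleanly the bound $\|u_j\|_{L^\infty} \lesssim \lambda_j$: the embedding in Proposition \ref{Sob.emb} is stated only qualitatively, so I need to invoke the identification of the Weyl-H\"ormander Sobolev norm with $\|A_{k,l}\cdot\|_{L^2}$ plus lower order terms (via ellipticity of $A_{k,l}$ in $S(M_{k,l},g^{k,l})$) to turn it into the quantitative estimate on the eigenfunctions used above.
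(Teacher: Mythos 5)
Your proposal is correct and follows essentially the same route as the paper: apply Theorem \ref{Th:LpLq-1} to the multiplier with symbol $\varphi(\lambda_j)$, use Proposition \ref{Sob.emb} to get $\|u_j\|_{L^\infty}\lesssim \lambda_j$, convert the level set $\{\varphi(\lambda_j)\geq s\}$ into $\{\lambda_j\leq u\}$ by monotonicity with $s=\varphi(u)$, and invoke the counting bound \eqref{eig.count.f} to estimate $\sum_{\lambda_j\leq u}\lambda_j^2\lesssim u^{2+\frac{(k+l)n}{2kl}}$. Your extra care about making the Sobolev embedding quantitative is exactly the content of the paper's inequality \eqref{eq.sob.emb.}.
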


 \begin{proof}
 	Observe that the operator $\varphi(A)$ is a Fourier multiplier with symbol $(\varphi(\lambda_j))_{j \in \mathbb{N}}$. Therefore an application of Theorem \ref{Th:LpLq-1} yields
	
	\begin{align}
	\label{first.estim.first.cor}
	\|\varphi(A)\|_{op}  \lesssim  \sup_{s>0} s \left(\sum_{\overset{j \in \mathbb{N}}{\varphi(\lambda_j)>s}} \|u_j\|^{2}_{L^{\infty}(\mathbb{R}^n)} \right)^{\frac{1}{p}-\frac{1}{q}} = \sup_{0<s\leq \varphi(1)} s \left( \sum_{\overset{j \in \mathbb{N}}{\varphi(\lambda_j)>s}}\|u_j\|^{2}_{L^{\infty}(\mathbb{R}^n)}\right)^{\frac{1}{p}-\frac{1}{q}} \end{align}
	since $\varphi \leq \varphi(1)$, so that $s \in (0,\varphi(1)]$ can be written as $s=\varphi(u)$. Now, by Lemma \ref{Sob.emb} together with \eqref{first.estim.first.cor} and by using the monotonicity of $\varphi$ we get
	\begin{align}\label{th.for.u}
	\|\varphi(A)\|_{op} \lesssim  \sup_{u>1}  \varphi(u) \left(\sum_{\overset{j \in \mathbb{N}}{\varphi(\lambda_j)>\varphi(u)}} \|u_j\|^{2}_{L^{\infty}(\mathbb{R}^n)} \right)^{\frac{1}{p}-\frac{1}{q}}\lesssim  \sup_{u>1} \varphi(u) \left( \sum_{\overset{j \in \mathbb{N}}{\lambda_j <u }} \lambda_{j}\right)^{\frac{1}{p}-\frac{1}{q}} \,.
	\end{align}
	Now, notice that as $u \rightarrow \infty$, there exists some $j_0 \in \mathbb{N}$ big enough such that $u \sim \lambda_{j_{0}}$. Then one can estimate
	\begin{equation}\label{estimatewithN}
	\sum_{\overset{j \in \mathbb{N}}{\lambda_{j}<u}}\lambda_{j} \lesssim u \sum_{\overset{j \in \mathbb{N}}{\lambda_j < \lambda_{j_0}}}1 \lesssim u N(\lambda_{j_0})\lesssim u \lambda_{j_0}^{\frac{(k+l)n}{2kl}} \sim u^{1+\frac{(k+l)n}{2kl}}\,,
	\end{equation}
	where the eigenvalue counting function $N(\lambda_{j_0})$ associated to $A$ is as in \eqref{eig.count.funt.anhar.}. Therefore, by combining \eqref{th.for.u} with \eqref{estimatewithN} one gets 
	\[
	\|\varphi(A)\|_{op}\lesssim \sup_{u>1} \varphi(u) \left(u^{1+\frac{(k+l)n}{2kl}}\right)^{\frac{1}{p}-\frac{1}{q}}\,,
	\]
	and the last completes the proof of Corollary \ref{cor.anh.1}.
 \end{proof}
 
 \begin{remark}
 Notice that one can estimate the operator norm $\|\varphi(A)\|_{op}$ with $\varphi$ as in Corollary \ref{cor.anh.1} just by using the fact that $A^{-1} \in S_{r}(L^2(\mathbb{R}^n))$ for any $r>\frac{(k+l)n}{2kl}$. Indeed, by using that $u \sim \lambda
 _{j_0}$ for some $j_0$ big enough, while also by the eigenvalue asymptotics \eqref{eigen.vanish} we have that $j^{\frac{1}{r}} \lesssim \lambda_j$ for any  $r>\frac{(k+l)n}{2kl}$, one can alternatively get the following estimate
 \begin{align}
    \label{eig.shatten}
    \sum_{\overset{j \in \mathbb{N}}{\lambda_j<u}}\lambda_{j}^{2} \lesssim \sum_{\overset{j \in \mathbb{N}}{\lambda_j<\lambda_{j_{0}}}}\lambda_{j}^{2}\leq \lambda_{j_0}^2 \sum_{j=1}^{j_0}1=j_0 \lambda_{j_0}^{2} \lesssim \lambda_{j_0}^{2} \lambda_{j_0}^{r} \sim u^{2+r}  \,.
\end{align}
A combination of \eqref{th.for.u} together with \eqref{eig.shatten} yields
\begin{equation}\label{alt.estimat}
\|\varphi(A)\|_{op} \lesssim \sup_{u>1}\varphi(u)\left(u^{2+r}\right)^{\frac{1}{p}-\frac{1}{q}}\,.
\end{equation}
Observe that the estimate  \eqref{cor.1.estim} is sharper than the one given by \eqref{alt.estimat}.
 \end{remark}

 \begin{corollary}\label{bound.inverse}
  The operator $A^{-m}$ for $m> \left(1+\frac{(k+l)n}{2kl}\right) \left(\frac{1}{p}-\frac{1}{q}\right)$ is bounded from $L^p(\mathbb{R}^n)$ to $L^q(\mathbb{R}^n)$. In particular, the operator $A^{-m}$ is bounded from $L^2(\mathbb{R}^n)$ to $L^2(\mathbb{R}^n)$ for any $m \geq 0$.
 \end{corollary}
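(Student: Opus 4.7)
The plan is to reduce the statement to a direct application of Corollary \ref{cor.anh.1} by choosing $\varphi(u) := u^{-m}$. First I will verify the hypotheses of that corollary: for any $m>0$ the function $\varphi(u)=u^{-m}$ is strictly decreasing on $[1,\infty)$ and tends to $0$ as $u\to\infty$, and by the functional calculus described around \eqref{dom.Ar} one has $\varphi(A_{k,l})=A_{k,l}^{-m}$.

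Next I would plug this $\varphi$ into estimate \eqref{cor.1.estim}: this gives
\[
\|A_{k,l}^{-m}\|_{L^p(\mathbb{R}^n)\to L^q(\mathbb{R}^n)}\;\lesssim\;\sup_{u>1} u^{-m}\bigl(u^{2+\frac{(k+l)n}{2kl}}\bigr)^{\frac{1}{p}-\frac{1}{q}}\;=\;\sup_{u>1} u^{\,-m+\left(2+\frac{(k+l)n}{2kl}\right)\left(\frac{1}{p}-\frac{1}{q}\right)}.
\]
The supremum on the right-hand side is finite precisely when the exponent of $u$ is non-positive, which is ensured by the hypothesis $m>\left(2+\frac{(k+l)n}{2kl}\right)\left(\tfrac{1}{p}-\tfrac{1}{q}\right)$. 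This yields the first assertion.

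For the $L^2\to L^2$ statement, the above strategy degenerates because $\tfrac{1}{p}-\tfrac{1}{q}=0$, so I would argue separately by appealing directly to the spectral theorem. Since $A_{k,l}=(-\Delta)^l+|x|^{2k}+1$ contains the additive constant $+1$, its eigenvalues satisfy $\lambda_j\geq 1$ for all $j\in\mathbb{N}$; hence for $m\geq 0$ the spectral multiplier $A_{k,l}^{-m}$ has eigenvalues $\lambda_j^{-m}\in(0,1]$, and by Plancherel $\|A_{k,l}^{-m}f\|_{L^2}\leq \|f\|_{L^2}$. The $m=0$ case is just the identity.

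No real obstacle is expected here; the entire proof is a short verification that the hypotheses of Corollary \ref{cor.anh.1} are satisfied for $\varphi(u)=u^{-m}$ and a bookkeeping of the resulting exponent, together with a one-line spectral argument for the endpoint $p=q=2$.
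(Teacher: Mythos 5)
Your proof is correct and follows essentially the same route as the paper: choose $\varphi(u)=u^{-m}$, apply Corollary \ref{cor.anh.1}, and observe that the resulting exponent of $u$ is negative under the stated hypothesis on $m$. Your separate Plancherel argument for the $p=q=2$, $m\geq 0$ endpoint (using $\lambda_j\geq 1$) is a welcome addition, since the paper's proof only treats $m>0$ via the corollary and leaves that case implicit.
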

 \begin{proof}
  Let $\varphi=\varphi(u):=u^{-m}$, for $u \geq 1$ and $m>0$. Then, $\varphi$ satisfies the hypothesis of Corollary \ref{cor.anh.1} and we have 
  \[
  \|A^{-m}\|_{L^p(\mathbb{R}^n)\rightarrow L^q(\mathbb{R}^n)} \lesssim \sup_{u>1}u^{-m} u^{C_{n,k,l}\left(\frac{1}{p}-\frac{1}{q} \right)}<\infty\,,
  \]
  for $C_{n,k,l}\left(\frac{1}{p}-\frac{1}{q} \right) \leq m$, where $C_{n,k,l}=1+\frac{(k+l)n}{2kl}$. 
 \end{proof}
 
 \begin{remark}\label{Sobointro}[Sobolev-type estimates]
	If we choose \[\varphi=\varphi(u):=\frac{1}{u^{a-b}}\,,\quad u\geq 1\,,\]
	then $\varphi$ satisfies the hypothesis of Corollary \ref{cor.anh.1}, and so for $f \in \mathcal{S}(\mathbb{R}^n)$ we have 
	\[
	\|A^{-a}{A}^{b}f\|_{L^q(\mathbb{R}^n)} \lesssim \|f\|_{L^p(\mathbb{R}^n)}\,,
	\]
	for $a,b$ being such that $a-b\geq C_{n,l,k} \cdot \left(\frac{1}{p}-\frac{1}{q}\right)^{-1}$, where $C_{n,l,k}=1+\frac{(k+l)n}{2kl}$, and for $1<p\leq 2 \leq q < \infty$, or equivalently 
	\begin{equation}\label{a-b}
	\|A^bf\|_{L^q(\mathbb{R}^n)} \lesssim \|A^{a}f\|_{L^p(\mathbb{R}^n)}\,.
	\end{equation}
	As particular cases of \eqref{a-b} we get:
	\begin{enumerate}
		\item For $p=q=2$ we recover the inclusion
			\[
		\|f\|_{\mathcal{H}^{b}}\lesssim \|f\|_{\mathcal{H}^{a}}\,,\quad a \geq b\,,
		\]	
		where the Sobolev norms $\|\cdot\|_{\mathcal{H}^m}$ have been defined in \eqref{Sob.spaces.anh}. 
		\item For $a\geq C_{n,l,k} \cdot \left(\frac{1}{p}-\frac{1}{q}\right)^{-1}$, $b=0$, we get the Sobolev-type estimate
		\[
		\|f\|_{L^q(\mathbb{R}^n)}\leq C \|A^{a}f\|_{L^p(\mathbb{R}^n)}\,.
		\]	
	\end{enumerate}
\end{remark}
 
 \begin{remark} \label{Heatintro}[The $A$-heat equation]
	For the operator $A$ we consider the heat equation 
	\begin{equation}
	\label{heat.eq}
	\partial_{t}v+Av=0\,,\quad v(0)=v_0\,.
	\end{equation}
	One can check that for each $t>0$ the function $v(t)=v(t,x):=e^{-tA}v_0$ is a solution of the initial value problem \eqref{heat.eq}. For $t>0$, consider the function $\varphi=\varphi(u):=e^{-tu}$. Now, $\varphi$ satisfies the assumptions of Corollary \ref{cor.anh.1} and we get 
	\begin{equation}
	\label{heat.eq1}
	\|e^{-tA}\|_{L^{p}(\mathbb{R}^n)\rightarrow L^q(\mathbb{R}^n)} \lesssim \sup_{u>0} e^{-tu}\cdot u^{-C_{n,k,l}\left(\frac{1}{p}-\frac{1}{q}\right)}\,,
	\end{equation}
	for $1<p\leq 2\leq q <\infty$ and $C_{n,k,l}=1+\frac{(k+l)n}{2kl}.$ Using techniques of standard mathematical analysis we see that 
	\begin{eqnarray}
\label{heat.estimat.}
\sup_{u>0}e^{-tu} \cdot u^{C_{n,k,l}\left(\frac{1}{p}-\frac{1}{q}\right)}& = & \left( \frac{C_{n,k,l}}{t}\left(\frac{1}{p}-\frac{1}{q}\right) \right)^{C_{n,k,l}\left( \frac{1}{p}-\frac{1}{q}\right)}\cdot e^{-C_{n,k,l}\left(\frac{1}{p}-\frac{1}{q}\right)}\nonumber\\
& =& C_{p,q}t^{-C_{n,k,l}\left(\frac{1}{p}-\frac{1}{q} \right)}\,.
\end{eqnarray}
	Indeed, let us consider the function $g(u)=e^{-tu} u^{C_{n,k,l}\left(\frac{1}{p}-\frac{1}{q} \right)}$, and compute its derivative 
	\[
	g^{'}(u)=u^{C_{n,k,l}\left(\frac{1}{p}-\frac{1}{q} \right)-1}e^{-tu} \left( C_{n,k,l}\left(\frac{1}{p}-\frac{1}{q} \right)-ut \right)\,.
	\]
	The only zero of the derivative $g^{'}$ is taken for $u_0= \frac{C_{n,k,l}}{t}\left(\frac{1}{p}-\frac{1}{q}\right)$ and $g^{'}$ changes sign from positive to negative at $u_0$.  Thus, $g$ attends a maximum at $u_0$ and we have proved \eqref{heat.estimat.}.
	Finally, combining \eqref{heat.eq1} with \eqref{heat.estimat.} we obtain 
	\[
	\|v(t,\cdot)\|_{L^q(\mathbb{R}^n)} \lesssim C_{p,q}t^{-C_{n,k,l}\left(\frac{1}{p}-\frac{1}{q} \right)}\ \|v_0\|_{L^q(\mathbb{R}^n)}\,,
	\]
	where $C_{n,k,l}=1+\frac{(k+l)n}{2kl}$.
\end{remark}

\section{Nuclearity and traces of spectral multipliers of anharmonic oscillator on modulation spaces}
This section is devoted to the study of $r$-nuclearity, where $0<r \leq 1$, and traces of spectral multipliers of the anharmonic oscillator $A.$ 

First we give a short exposition of nuclear operators on Banach spaces with the metric approximation property. Later on, we apply these ideas to the study of our operators when acting on \textit{modulation spaces} $\mathcal{M}_{w}^{p,q}$, where $1\leq p,q <\infty$, on $\mathbb{R}^n$.

Let $T\in \mathcal{L}(\mathcal{B},\mathcal{B})$ be a linear operator from some Banach space $\mathcal{B}$ to $\mathcal{B}$. We say that $T$ is a \textit{nuclear operator on $\mathcal{B}$}, and we write $T \in \mathcal{N}(\mathcal{B})$, if $T$ admits a decomposition of the form
\begin{equation}
\label{inf.repr.}
T=\sum_{j=1}^{\infty}\varphi_j \otimes\psi'_j\,, \quad \text{where}\quad (\varphi_j)_{j=1}^{\infty} \subset \mathcal{B}\quad \text{and}\quad (\psi'_j)_{j=1}^{\infty} \subset \mathcal{B}^{'}\,,
\end{equation}
and we have $\sum_{j=1}^{\infty} \|\psi'_j\|_{\mathcal{B}^{'}}\|\varphi_j\|_{\mathcal{B}}<\infty$. 

Grothendieck in \cite{Groth2} proved that if $\mathcal{B}$ has the metric approximation property, and $T \in \mathcal{N}(\mathcal{B})$, then the trace of $T$, denoted by $\textnormal{Tr}(T)$, is well defined, i.e., we have that
\begin{equation}
\label{tr.nucl.1}
\textnormal{Tr}(T)=\sum_{j=1}^{\infty} \langle \varphi_j,\psi'_j \rangle_{\mathcal{B},\mathcal{B}^{'}}<\infty\,,
\end{equation}
Recall that the Banach space $\mathcal{B}$ has the metric approximation property if for every compact set $K \subset \mathcal{B}$ and $\epsilon>0$, there exists an operator $F$ of finite rank such that $\|x-Fx\|_{\mathcal{B}}<\epsilon$ for all $x \in K$, see [Lemma 10.2.20 \cite{Piet}] for this characterisation of Banach spaces with the metric approximation property.

\indent We note that nuclear operators agree with trace class operators in the setting of Hilbert spaces. For such operators Lidski\u{i} \cite{Lid} proved that
\begin{equation}
\label{Lid.form}
\textnormal{Tr}(T)=\sum_{j=1}^{\infty} \lambda_j\,,
\end{equation}
 where each eigenvalue $\lambda_j$ is repeated accordingly to multiplicity. Formula \eqref{Lid.form} is known as Lidski\u{i}'s formula. However, as we will see later, Liski\u i's formula can still hold true for operators on some Banach spaces, under certain assumptions. To become more precise, let us first recall the following definition.  
 
 Let $\mathcal{B}$ be a Banach space and let $0<r \leq 1$. A a linear operator $T \in \mathcal{L}(\mathcal{B},\mathcal{B})$ is called \textit{$r$-nuclear} if $T$ has a representation as in \eqref{inf.repr.} that satisfies
 	\[
 	\sum_{j=1}^{\infty}\|\psi'_j\|_{\mathcal{B}^{'}}^{r}\|\varphi_j\|_{\mathcal{B}}^{r}<\infty\,.
 	\]
 In \cite{Groth2}, Grothendieck proved that if $T \in \mathcal{L}(\mathcal{B},\mathcal{B})$ is a $\frac{2}{3}$-nuclear operator (and so also $r^{'}$-nuclear, with $r^{'}\leq 2/3$), and $\mathcal{B}$ has the metric approximation property, then the trace of $T$ can be calculated using  Lidski\u{i}'s formula \eqref{Lid.form}.

 The operators we consider here, act on the modulation spaces $\mathcal{M}_{w}^{p,q}$ on $\mathbb{R}^n$ introduced by  Feichtinger in \cite{F}. These spaces are, under some conditions on the weight function $w$, Banach spaces with the metric approximation property. Roughly speaking a modulation space is defined by imposing a quasi-norm estimate on the short-time Fourier transform (STFT) $V_gf$ of the involved distributions $f$. Formally, we give the following definition
\begin{definition}
	Let $1 \leq p,q<\infty$. For a suitable weight $w$ on $\mathbb{R}^{2n}$, and a window function $g \in \mathcal{S}(\mathbb{R}^n)$, we define the modulation space $\mathcal{M}_{w}^{p,q}(\mathbb{R}^n)$ to be the set of tempered distributions $f \in \mathcal{S}^{'}(\mathbb{R}^n)$ such that 
	\[
	\|f\|_{\mathcal{M}_{w}^{p,q}}:= \left( \int_{\mathbb{R}^n} \left( |V_gf(x,\xi)|^p w(x,\xi)^p\,dx\right)^{\frac{p}{q}}d\xi\right)^{\frac{1}{p}}<\infty\,,
	\]
	where $V_gf$ denotes the short-time Fourier transform of $f$ with respect to $g$ at the point $(x,\xi)$, i.e., we can write
	\[
	V_gf(x,\xi)=\int_{\mathbb{R}^n}f(y)\overline{g(y-x)}e^{-iy\cdot \xi}\,dy\,.
	\]
\end{definition}
The modulation space $\mathcal{M}_{w}^{p,q}(\mathbb{R}^n)$ endowed with the above norm becomes a Banach space. 

We recall that a weight function is a non-negative, locally integrable function on $\mathbb{R}^{2n}$. A weight function $u$ is called \textit{submultiplicative} if 
\[
u(x_1+x_2) \leq u(x_1)u(x_2)\,,\quad \text{for all}\quad x_1,x_2 \in \mathbb{R}^{2n}\,;
\]
a weight function $v$ is called $u$-\textit{moderate} if
\[
v(x_1+x_2) \leq u(x_1)v(x_2)\,,\quad \text{for all}\quad x_1,x_2 \in \mathbb{R}^{2n}\,.
\]
Weights $v_s$, $s \in \mathbb{R}$, of polynomial type; that is weights of the form 
\[
v_s(x,\xi)=(1+|x|^2+|\xi|^2)^{\frac{s}{2}}\,,
\]
play an important role. Any $v_s$-moderated weight function for some $s$ will be called \textit{polynomially moderated}.

 Polynomially moderate weights give rise to modulation spaces with the metric approximation property. In particular, we have the following Corollary as in [Corollary 3.1\cite{DRW}].
\begin{corollary}\label{mod.spac.poly.w.}
	Let $1 \leq p,q <\infty$, and let $w$ be a submultiplicative polynomially moderate weight. Then $\mathcal{M}_{w}^{p,q}(\mathbb{R}^n)$ has the metric approximation property.
\end{corollary}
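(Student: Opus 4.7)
The plan is to transfer the metric approximation property from a concrete mixed-norm weighted Lebesgue space to $\mathcal{M}_w^{p,q}(\mathbb{R}^n)$ via the short-time Fourier transform. First I would fix a nonzero Schwartz window $g$ normalized so that $\|g\|_{L^2}=1$. Moyal's orthogonality identity, combined with the polynomial moderateness of $w$ (which ensures that different Schwartz windows yield equivalent norms on $\mathcal{M}_w^{p,q}$), implies that $V_g:\mathcal{M}_w^{p,q}(\mathbb{R}^n)\to L^{p,q}_w(\mathbb{R}^{2n})$ is a bounded embedding onto a closed subspace, with the adjoint STFT $V_g^*$ supplying a bounded left inverse through the standard reconstruction formula $f=V_g^*V_gf$. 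Hence $\mathcal{M}_w^{p,q}(\mathbb{R}^n)$ is realised as a complemented closed subspace of $L^{p,q}_w(\mathbb{R}^{2n})$.

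Next I would verify that $L^{p,q}_w(\mathbb{R}^{2n})$ itself has the metric approximation property. Since $1\leq p,q<\infty$ and $w$ is locally integrable and polynomially moderate, the compactly supported simple functions adapted to dyadic partitions of $\mathbb{R}^{2n}$ are dense. The associated dyadic conditional expectations $E_N$, when composed with truncation to large compact regions, are norm-one linear operators of finite rank, and they converge strongly to the identity on $L^{p,q}_w(\mathbb{R}^{2n})$. A routine equicontinuity argument upgrades strong convergence to uniform convergence on norm-compact subsets, which is precisely the content of the metric approximation property for $L^{p,q}_w(\mathbb{R}^{2n})$.

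The main obstacle is upgrading this back to $\mathcal{M}_w^{p,q}(\mathbb{R}^n)$ without losing the norm-one control: the naive composition $V_g^*E_NV_g$ only attains operator norm bounded by $\|V_g\|\,\|V_g^*\|$, which gives merely the bounded approximation property. To circumvent this I would invoke the Feichtinger-Gr\"{o}chenig coorbit and atomic-decomposition machinery: for a window $g$ in Feichtinger's algebra $\mathcal{M}^{1,1}_v$, with $v$ a submultiplicative weight dominating $w$ (such a $v$ exists because $w$ is itself submultiplicative and polynomially moderate), and for a sufficiently dense time-frequency lattice $\Lambda\subset\mathbb{R}^{2n}$, the Gabor system $\{\pi(\lambda)g\}_{\lambda\in\Lambda}$ is a frame for $\mathcal{M}_w^{p,q}(\mathbb{R}^n)$ with frame bounds arbitrarily close to $1$. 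Truncating the resulting Gabor expansion $f=\sum_{\lambda\in\Lambda}c_\lambda(f)\pi(\lambda)g$ to finite subsets of $\Lambda$, and using that the discrete coefficient space $\ell^{p,q}_w(\Lambda)$ has the metric approximation property via its canonical unit-vector Schauder basis, produces finite-rank operators on $\mathcal{M}_w^{p,q}(\mathbb{R}^n)$ of norm arbitrarily close to one that converge uniformly to the identity on norm-compact subsets, establishing the metric approximation property.
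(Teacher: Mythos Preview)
The paper does not supply its own proof of this corollary; it is quoted directly from \cite[Corollary~3.1]{DRW}, so strictly speaking there is no in-paper argument to compare against. Your outline is in the spirit of the argument one finds in \cite{DRW}: first obtain the metric approximation property for the ambient mixed-norm space (continuous or discrete) by exhibiting an explicit net of contractive finite-rank operators, and then transfer it to $\mathcal{M}_w^{p,q}(\mathbb{R}^n)$ through the short-time Fourier transform. Your first two steps are essentially correct and standard.

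The third step, however, has a genuine gap. Saying that a dense Gabor system has ``frame bounds arbitrarily close to $1$'' is an $L^2$-statement: it asserts that the frame operator $S=D_gC_g$ is close to the identity on $L^2(\mathbb{R}^n)$. What you actually need is a \emph{Banach}-frame statement on $\mathcal{M}_w^{p,q}$, namely that both the analysis map $C_{\tilde g}:\mathcal{M}_w^{p,q}\to\ell^{p,q}_w(\Lambda)$ and the synthesis map $D_g:\ell^{p,q}_w(\Lambda)\to\mathcal{M}_w^{p,q}$ can be made to have operator norms whose product is arbitrarily close to $1$ by refining the lattice. The standard Feichtinger--Gr\"ochenig coorbit machinery (see \cite{Groch}, \cite{DG}) only yields uniform \emph{boundedness} of these maps, not norm-convergence to $1$; consequently your truncated operators $D_g P_F C_{\tilde g}$ carry a factor $\|D_g\|\,\|C_{\tilde g}\|$ that you have not shown to be $\leq 1+\varepsilon$. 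As it stands this delivers the bounded approximation property, which you yourself identified as the obstacle two sentences earlier---the Gabor discretization does not automatically cure it. To close the gap you must either cite a sharp sampling theorem giving $(1\pm\varepsilon)$-equivalence between $\|V_gf\|_{L^{p,q}_w}$ and the suitably normalised $\ell^{p,q}_w$-norm of its samples, or bypass the issue as in \cite{DRW} by building the finite-rank approximants directly on the $L^{p,q}_w$ side and exploiting that $V_g$ is an \emph{isometry} (so that only the isometric embedding, and not the adjoint $V_g^\ast$, enters the norm estimate).
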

Thus, for $T \in \mathcal{N}(\mathcal{M}_{w}^{p,w})$, where $T=\sum_{j=1}^{\infty}\varphi_j \otimes\psi'_j$, and $ (\varphi_j)_j \subset \mathcal{M}_{w}^{p,q}\,, (\psi'_j)_j \subset \mathcal{M}_{w^{-1}}^{p^{'},q^{'}}$ we can write
\[
\textnormal{Tr}(T)=\sum_{j=1}^{\infty} \langle \phi_j,\psi'_j \rangle_{\mathcal{M}_{w}^{p,q},\mathcal{M}_{w^{-1}}^{p^{'},q^{'}}}\,,
\]
where the duality has been defined via 
\begin{equation}\label{duality}
\langle f,h \rangle_{\mathcal{M}_{w}^{p,q},\mathcal{M}_{w^{-1}}^{p^{'},q^{'}}}=\int_{\mathbb{R}^{2n}}V_gf(x,\xi) \overline{V_g\overline{h}(x,\xi)}\,dx\,d\xi\,,
\end{equation}
for $f \in \mathcal{M}_{w}^{p,q}$, $h \in \mathcal{M}_{w^{-1}}^{p^{'},q^{'}}=\left(\mathcal{M}_{w}^{p,q} \right)^{'}$.\\

Note that the duality \eqref{duality} is different, with respect to taking the complex conjugate, from the one in \cite{Groch} to make it compatible with the Grothendieck's theory.\\

\indent The main result of this chapter is an application of the next corollary as in [Corollary 5.1 \cite{DRW}].
\begin{corollary}
	\label{sec5.cor.}
	Let $0<r \leq 1$, $1\leq p,q<\infty$ and $w$ be a submultiplicative polynomially moderate weight. An operator $T \in \mathcal{L}(\mathcal{M}_{w}^{p,q},\mathcal{M}_{w}^{p,q})$ is $r$-nuclear if and only if its kernel $k(x,y)$ can be written in the form 
	\[
	k(x,y)=\sum_{j=1}^{\infty} \phi_j \otimes \psi'_j\,,
	\]
	with $\phi_j \in \mathcal{M}_{w}^{p,q}\,,\psi'_j \in \mathcal{M}_{w^{-1}}^{p^{'},q^{'}}$ and 
	\[
	\sum_{j=1}^{\infty}\|\phi_j\|_{\mathcal{M}_{w}^{p,q}}^{r}\|\psi'_j\|_{\mathcal{M}_{w^{-1}}^{p^{'},q^{'}}}^{r}<\infty\,.
	\]
	Moreover, if $T$ is $r$-nuclear with $r\leq \frac{2}{3}$, then 
	\begin{equation}
	\label{cor.trace.f}
	\textnormal{Tr}(T)=\sum_{j=1}^{\infty}\lambda_j\,,
	\end{equation}
	where $\lambda_j$, $j=1,2,\cdots$, are the eigenvalues of $T$ repeated according to multiplicity.
\end{corollary}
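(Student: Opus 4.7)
The idea is to exhibit $F(A_{k,l})$ explicitly as a nuclear-type series coming from the spectral decomposition of $A_{k,l}$, and then to read off both $r$-nuclearity and the trace formula from Corollary \ref{sec5.cor.}. Since $(u_j)_{j\in\mathbb{N}}$ is an orthonormal basis of $L^2(\mathbb{R}^n)$ consisting of eigenfunctions of $A_{k,l}$, the functional calculus gives, formally,
\[
F(A_{k,l})f \;=\; \sum_{j=1}^{\infty} F(\lambda_j)\,\langle f, u_j\rangle_{L^2}\, u_j\,,
\]
so that the Schwartz kernel of $F(A_{k,l})$ admits the rank-one expansion
\[
K(x,y) \;=\; \sum_{j=1}^{\infty} \varphi_j(x)\,\psi'_j(y)\,,\qquad \varphi_j := F(\lambda_j)\,u_j\,,\quad \psi'_j := \overline{u_j}\,,
\]
where under the duality \eqref{duality} the distribution $\psi'_j=\overline{u_j}$ represents the functional $f \mapsto \langle f, u_j\rangle_{L^2}$.

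\textbf{Verifying the hypotheses of Corollary \ref{sec5.cor.}.} Because $A_{k,l}$ is an elliptic Weyl--H\"ormander operator whose inverse is an isomorphism of $\mathcal{S}(\mathbb{R}^n)$, each $u_j$ lies in $\mathcal{S}(\mathbb{R}^n)$; combined with the continuous inclusion $\mathcal{S}(\mathbb{R}^n)\hookrightarrow \mathcal{M}_v^{p,q}(\mathbb{R}^n)$ valid for every polynomially moderate weight $v$, this shows $u_j\in \mathcal{M}_w^{p,q}\cap \mathcal{M}_{w^{-1}}^{p',q'}$ and also $\overline{u_j} \in \mathcal{M}_{w^{-1}}^{p',q'}$ with $\|\overline{u_j}\|_{\mathcal{M}_{w^{-1}}^{p',q'}} = \|u_j\|_{\mathcal{M}_{w^{-1}}^{p',q'}}$ (complex conjugation reflects the frequency variable in $V_g$ and can be absorbed by a symmetric choice of window). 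The assumed summability
\[
\sum_{j=1}^{\infty} |F(\lambda_j)|^r\,\|u_j\|^r_{\mathcal{M}_w^{p,q}}\,\|u_j\|^r_{\mathcal{M}_{w^{-1}}^{p',q'}} \;<\;\infty
\]
is therefore precisely the hypothesis of Corollary \ref{sec5.cor.} for the expansion of $K$ displayed above, so $F(A_{k,l})$ is $r$-nuclear on $\mathcal{M}_w^{p,q}(\mathbb{R}^n)$.

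\textbf{Trace and absolute convergence.} For $r=1$ the metric approximation property of $\mathcal{M}_w^{p,q}$ (Corollary \ref{mod.spac.poly.w.}) makes the Grothendieck trace well-defined, and I compute it term by term:
\[
Tr(F(A_{k,l})) \;=\; \sum_{j=1}^{\infty}\langle \varphi_j,\psi'_j\rangle_{\mathcal{M}_w^{p,q},\mathcal{M}_{w^{-1}}^{p',q'}} \;=\; \sum_{j=1}^{\infty} F(\lambda_j)\,\langle u_j,\overline{u_j}\rangle\,.
\]
Evaluating the pairing through \eqref{duality} gives $\langle u_j,\overline{u_j}\rangle = \int_{\mathbb{R}^{2n}} |V_g u_j(x,\xi)|^2\,dx\,d\xi = \|g\|_{L^2}^{2}\,\|u_j\|_{L^2}^{2}=1$ by Moyal's identity with a normalized window $g$, which yields $Tr(F(A_{k,l}))=\sum_{j} F(\lambda_j)$. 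Absolute convergence of this series then follows from the same Moyal identity, since the duality bound $1 = |\langle u_j,\overline{u_j}\rangle|\leq \|u_j\|_{\mathcal{M}_w^{p,q}}\|u_j\|_{\mathcal{M}_{w^{-1}}^{p',q'}}$ gives $|F(\lambda_j)|\leq |F(\lambda_j)|\,\|u_j\|_{\mathcal{M}_w^{p,q}}\|u_j\|_{\mathcal{M}_{w^{-1}}^{p',q'}}$, and the right-hand side is summable by hypothesis.

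\textbf{Main obstacle.} The delicate point is the duality bookkeeping, since the pairing in \eqref{duality} is the Grothendieck-compatible version twisted by complex conjugation rather than the standard sesquilinear one; care is required to verify that the identification $\psi'_j \leftrightarrow \overline{u_j}$ produces $\langle u_j,\overline{u_j}\rangle=1$ on the nose (for a suitably normalized window) and that conjugation does not distort $\|u_j\|_{\mathcal{M}_{w^{-1}}^{p',q'}}$, so that the explicit expansion of $K$ feeds cleanly into the quasi-norm estimate of Corollary \ref{sec5.cor.}.
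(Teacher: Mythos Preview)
Your proposal does not address the stated Corollary at all. Corollary~\ref{sec5.cor.} is a general characterisation of $r$-nuclearity on modulation spaces together with the Grothendieck--Lidski\u{i} trace formula for $r\leq \tfrac{2}{3}$; it says nothing about $A_{k,l}$ or about $F(A_{k,l})$. In the paper this corollary is not proved: it is quoted verbatim from \cite[Corollary~5.1]{DRW}, and its proof relies on the metric approximation property of $\mathcal{M}_w^{p,q}$ (Corollary~\ref{mod.spac.poly.w.}) combined with Grothendieck's abstract theory of $r$-nuclear operators. What you have written is instead a proof of the \emph{subsequent} Theorem, the one asserting that $F(A_{k,l})$ is $r$-nuclear under the summability hypothesis~\eqref{sec5.thm.as.1} and that its trace equals $\sum_j F(\lambda_j)$ when $r=1$.

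Read as a proof of that Theorem, your argument is essentially the same as the paper's. Both expand the kernel of $F(A_{k,l})$ through the spectral decomposition as $\sum_j F(\lambda_j)\,u_j\otimes\overline{u_j}$, feed this expansion into Corollary~\ref{sec5.cor.} to obtain $r$-nuclearity, compute the trace term-by-term using the duality~\eqref{duality} and the STFT orthogonality relations to get $\langle u_j,\overline{u_j}\rangle=1$, and deduce absolute convergence of $\sum_j F(\lambda_j)$ from the inequality $|\langle u_j,\overline{u_j}\rangle|\leq \|u_j\|_{\mathcal{M}_w^{p,q}}\|u_j\|_{\mathcal{M}_{w^{-1}}^{p',q'}}$. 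Your write-up is slightly more explicit about why $u_j\in\mathcal{S}(\mathbb{R}^n)$ and about the conjugation bookkeeping, but the route is identical.
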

We note that in the context of a general Banach space with the approximation property, the range of $r$, $r \leq \frac{2}{3}$, in the $r$-nuclearity is sharp for the trace formula \eqref{cor.trace.f}. However, the above condition on $r$ can be relaxed if one considers for instance traces in the $L^p$ spaces (cf. \cite{DR}, \cite{RL}). In addition, as the next theorem shows, the trace formula \eqref{cor.trace.f} can still holds true in the context of an operator $f(A) \in \mathcal{L}(\mathcal{M}_{w}^{p,q}, \mathcal{M}_{w}^{p,q})$ for even larger values of $r$, and in particular even for simply nuclear operators (case $r=1$).

 We can write with convergence in $L^2(\mathbb{R}^n)$
\[
g=\sum_{j=1}^{\infty}(g,\overline{u_j})_{L^2}u_j\,,
\]
where $(u_j)_j \subset L^2(\mathbb{R}^n)$ is the set of eigenfunctions of $A$.

Hence, the kernel of $A$ can be written as
\[
k(x,y)=\sum_{j=1}^{\infty}A u_j(x)\overline{u}_j(y)=\sum_{j=1}^{\infty}\lambda_j u_j(x)\overline{u}_j(y)\,.
\]
This can be justified by taking $N>0$ large enough so that $\sum_{j=1}^{\infty} \lambda_{j}^{-N}<\infty$ and the kernel of $A^{-N}$ can be decomposed as 
\[
k_{A^{-N}}(x,y)=\sum_{j=1}^{\infty}\lambda_{j}^{-N}u_j(x) \overline{u}_j(y)\,.
\]
For functions $f$ of $A$, the kernel can be expressed in the form 
\[
k_{f(A)}=\sum_{j=1}^{\infty}f(\lambda_j)u_j(x)\overline{u}_j(y)\,.
\]
\begin{theorem}\label{delgado} 
	Let $0<r \leq 1$, $1\leq p,q<\infty$ and $w$ be a submultiplicative polynomially moderate weight. The operator $f(A)$ is $r$-nuclear on $\mathcal{M}_{w}^{p,q}(\mathbb{R}^n)$, provided that 
	\begin{equation}\label{sec5.thm.as.1}
	\sum_{j=1}^{\infty}|f(\lambda_j)|^r \|u_j\|^{r}_{\mathcal{M}_{w}^{p,q}}\|u_{j}\|^{r}_{\mathcal{M}_{w^{-1}}^{p^{'},q^{'}}}<\infty\,.
	\end{equation}
	If, in particular, \eqref{sec5.thm.as.1} holds for $r=1$, then we have the trace formula
	\begin{equation}
	\label{sec5.thm.as.2}
	\textnormal{Tr}(f(A))= \sum_{j=1}^{\infty} f(\lambda_j)\,,
	\end{equation}
	where the series $\sum_{j=1}^{\infty} f(\lambda_j)$ converges absolutely.
\end{theorem}
We note that the modulation space $\mathcal{M}_{w}^{p,q}$ corresponds to a weight of the form $w(x,\xi)=(1+|x|^2+|\xi|^2)^{\frac{s}{2}}$, while the STFT of $f$ with respect to $g$, $V_gf$ is taken with respect to the standard Gaussian window $g(x)=2^{\frac{n}{4}}e^{-\pi x^2}$.
\begin{proof}
	The first part follows from Corollary \ref{sec5.cor.}. Formula \eqref{sec5.thm.as.2} is expected from the general Grothendieck's theory in the case where $r \leq \frac{2}{3}$. For $r=1$, by \eqref{tr.nucl.1} we obtain \[
	\textnormal{Tr}(f(A))=\sum_{j=1}^{\infty}f(\lambda_j) \langle u_j,\overline{u}_j \rangle_{\mathcal{M}_{w}^{p,q},\mathcal{M}_{w^{-1}}^{p^{'},q^{'}}}=\sum_{j=1}^{\infty}f(\lambda_j)\,,
	\]
	since by the duality \eqref{duality} we have
	\[
	\langle u_j,\overline{u}_j \rangle_{\mathcal{M}_{w}^{p,q},\mathcal{M}_{w^{-1}}^{p^{'},q^{'}}}=( V_g u_j,V_gu_j )_{L^2}=( u_j,u_j)_{L^2}\|g\|_{L^2}= 1\,,
	\]
	where $( u_j,u_j)_{L^2}=\|g\|_{L^2}=1$, where we have used the orthogonality relations for the STFT, i.e., that
	\[
	( V_{g_1}f_1,V_{g_2}f_2)_{L^2}=( f_1,f_2)_{L^2} \overline{( g_1,g_2 )}_{L^2}\,,\quad \text{for}\quad f_1,f_2,g_1,g_2 \in L^2\,.
	\]
	The series in \eqref{sec5.thm.as.2} converges absolutely in view of 
	\begin{eqnarray*}
		\sum_{j=1}^{\infty}|f(\lambda_j)|& =& \sum_{j=1}^{\infty}|f(\lambda_j)| \langle u_j,u_j\rangle_{L^2}= \sum_{j=1}^{\infty}|f(\lambda_j)| \langle u_j,\overline{u}_j \rangle_{\mathcal{M}_{w}^{p,q},\mathcal{M}_{w^{-1}}^{p^{'},q^{'}}}\\
		& \leq & \sum_{j=1}^{\infty}|f(\lambda_j)| \|u_j\|_{\mathcal{M}_{w}^{p,q}}\|u_j\|_{\mathcal{M}_{w^{-1}}^{p^{'},q^{'}}}<\infty\,,
	\end{eqnarray*}
	which is finite by assumption. This completes the proof. 
\end{proof}
Observe that the proof of Theorem \ref{delgado} relies exclusively on the fact that the operator $A^{-1}$ is compact and self-adjoint on $L^2(\mathbb{R}^n)$. The following corollary is then immediate.
\begin{corollary}
	Let $0<r \leq 1$, $1 \leq p,q <\infty$ and $w$ be a submultiplicative polynomially weight. Let also $T$ be an operator on $L^2(\mathbb{R}^n)$ with discrete spectrum, and whose eigenvalues form an orthonormal basis in $L^2(\mathbb{R}^n)$. Then the operator $f(T)$ is $r$-nuclear on $\mathcal{M}_{w}^{p,q}(\mathbb{R}^n)$ provided that 
	\begin{equation}
	\label{cor.nucl.extra}
	\sum_{j=1}^{\infty}|f(\lambda_j)|^r \|u_j\|^{r}_{\mathcal{M}_{w}^{p,q}}\|u_j\|^{r}_{\mathcal{M}_{w^{-1}}^{p^{'},q^{'}}}<\infty\,,
	\end{equation}
	where $\{u_j\}_{j \in \mathbb{N}}$, $\{\lambda_j\}_{j \in \mathbb{N}}$ denote the sets of the eigefunctions and eigenvalues of $T$, respectively. If in particular, \eqref{cor.nucl.extra} holds for $r=1$, then we have the trace formula
	\[
	\textnormal{Tr}((f(T)))=\sum_{j=1}^{\infty}f(\lambda_j)\,,
	\]
	where the above series converges absolutely.
\end{corollary}

\bibliographystyle{amsplain}

\end{document}